\DeclareRobustCommand{\cyrtext}{%
  \fontencoding{T2A}\selectfont\def\encodingdefault{T2A}}
\DeclareRobustCommand{\textcyr}[1]{\leavevmode{\cyrtext #1}}
\theoremstyle{plain}
\newtheorem{thm}{\protect\theoremname}
\theoremstyle{plain}
\newtheorem*{lem*}{\protect\lemmaname}
\theoremstyle{plain}
\newtheorem{lem}[thm]{\protect\lemmaname}
\theoremstyle{plain}
\newtheorem*{conjecture*}{\protect\conjecturename}
\gdef\SetFigFontNFSS#1#2#3#4#5{} %Silence pointless warnings due to xfig
\gdef\SetFigFont#1#2#3#4#5{} %Silence pointless warnings due to xfig
\def\clap#1{\hbox to 0pt{\hss#1\hss}}
\DeclareMathOperator{\sign}{sign}
\definecolor{myblue}{rgb}{0.09,0.32,0.44} %22-84-113
\theoremstyle{remark}
\newtheorem*{qst*}{Question}
\newtheorem*{rmrks*}{Remarks}
\newtheorem*{probs*}{Problems}
\theoremstyle{plain}
\newtheorem*{thm7p}{Theorem \ref{thm:KO98}'}
\newlength{\tempindent} 
\newcommand{\lazyenum}{
\setlength{\tempindent}{\parindent} 
\begin{enumerate}[leftmargin=0cm,itemindent=0.7cm,labelwidth=\itemindent,labelsep=0cm,align=left,label=\arabic*)]
\setlength{\parskip}{\smallskipamount}
\setlength{\parindent}{\tempindent}
}
\def\moverlay{\mathpalette\mov@rlay}
\def\mov@rlay#1#2{\leavevmode\vtop{%
   \baselineskip\z@skip \lineskiplimit-\maxdimen
   \ialign{\hfil$\m@th#1##$\hfil\cr#2\crcr}}}
\newcommand{\charfusion}[3][\mathord]{
    #1{\ifx#1\mathop\vphantom{#2}\fi
        \mathpalette\mov@rlay{#2\cr#3}
      }
    \ifx#1\mathop\expandafter\displaylimits\fi}
\renewcommand{\andify}{%
  \nxandlist{\unskip, }{\unskip{} \@@and~}{\unskip{} \@@and~}}
\def\author@andify{%
  \nxandlist {\unskip ,\penalty-1 \space\ignorespaces}%
    {\unskip {} \@@and~}%
    {\unskip \penalty-2 \space \@@and~}%
}
\let\@wraptoccontribs\wraptoccontribs
\def\afs#1#2{\href{#1}{\nolinkurl{#2}}}
\def\afs#1#2{\burlalt{#1}{#2}}
\def\affs#1#2{\href{#1}{\nolinkurl{#2}}}
\newcommand{\customlabel}[2]{%
\protected@write \@auxout {}{\string \newlabel {#1}{{#2}{}}}}
\newcommand{\customlabel}[2]{%
   \protected@write \@auxout {}{\string \newlabel {#1}{{#2}{\thepage}{#2}{#1}{}} }%
   \hypertarget{#1}%
}
\providecommand{\conjecturename}{Conjecture}
\providecommand{\lemmaname}{Lemma}
\providecommand{\theoremname}{Theorem}
\begin{document}
\title{Homeomorphisms and Fourier expansion}
\author{Gady Kozma and Alexander Olevski\u{i}}
\begin{abstract}
We survey our recent result that for every continuous function there
is an absolutely continuous homeomorphism such that the composition
has a uniformly converging Fourier expansion. We mention the history
of the problem, orginally stated by Luzin, and some details of the
proof.
\end{abstract}

\maketitle

\section{Introduction}

Can one improve convergence properties of Fourier series by change
of variable? The following theorem was proved by Julius P\'al in
1914 \cite{P14} in a slightly weaker form, and in the form below
by Harald Bohr in 1935 \cite{B35}. Salem gave a simplified proof
(\cite{S44} or \cite[\S 4.12, page 327]{B64}). See also \cite{KO23}.
\begin{thm}
For every real function $f\in C(\mathbb{T})$ there exists a homeomorphism
$h:\mathbb{T}\to\mathbb{T}$ such that the Fourier series of the superposition
$f\circ h$ converges uniformly.
\end{thm}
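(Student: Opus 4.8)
The plan is to separate a soft reduction from a hard construction.

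\emph{Reduction to a block decomposition.} It suffices to produce a homeomorphism $h$ together with a decomposition
\[
f\circ h=\sum_{k\ge 1}P_k,
\]
where each $P_k$ is a trigonometric polynomial whose spectrum lies in a block $\{\,n:N_{k-1}\le|n|\le N_k\,\}$ for some strictly increasing integers $N_0<N_1<\cdots\to\infty$, with $\sum_k\|P_k\|_\infty<\infty$ and $\|P_k\|_\infty\log N_k\to 0$. Indeed, for $N_{k-1}\le N<N_k$ one has $S_N(f\circ h)=P_1+\cdots+P_{k-1}+S_NP_k$, since $S_N$ fixes the earlier polynomials and annihilates the later ones; hence
\[
\bigl\|f\circ h-S_N(f\circ h)\bigr\|_\infty\le\|P_k-S_NP_k\|_\infty+\sum_{j>k}\|P_j\|_\infty\le C(\log N_k)\|P_k\|_\infty+\sum_{j>k}\|P_j\|_\infty,
\]
using only the $O(\log N)$ bound for the Lebesgue constants; both terms tend to $0$ as $N\to\infty$. (Equivalently one may use polygonal $P_k$, exploiting that a continuous piecewise-linear function has Fourier coefficients $O(1/n^2)$, the constant being the total variation of its derivative; the balance to be arranged is the same.)

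\emph{Construction of $h$.} The homeomorphism will be a uniform limit $h=\lim_n\gamma_1\circ\cdots\circ\gamma_n$ of piecewise-linear homeomorphisms, built so that after $n$ steps one has $f\circ(\gamma_1\circ\cdots\circ\gamma_n)=P_1+\cdots+P_n+g_n$ with the $P_j$ sitting in their blocks and $\|g_n\|_\infty$ decaying geometrically. The engine for one step is P\'al's geometric idea: approximate the current small remainder by a step function on a very fine partition of $\mathbb{T}$, and then choose $\gamma_{n+1}$ to speed up and slow down the parameter so that the ascents and descents of $g_n\circ\gamma_{n+1}$ become, up to a tiny uniform error, regularly spaced at a frequency scale we prescribe. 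Then $g_n\circ\gamma_{n+1}$ coincides, to within an arbitrarily small uniform error, with a trigonometric polynomial $P_{n+1}$ whose spectrum can be forced into a fresh high block $[N_n,N_{n+1}]$ and whose sup norm is controlled in terms of $\|g_n\|_\infty$. Putting $g_{n+1}:=g_n\circ\gamma_{n+1}-P_{n+1}$, letting the blocks march to infinity and the errors shrink fast, yields the decomposition demanded above.

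\emph{The main obstacle.} The crux is that a homeomorphism does not commute with projection onto a spectral block: applying $\gamma_{n+1}$ to clean up $g_n$ simultaneously deforms the already-produced polynomials $P_1,\dots,P_n$, and $P_j\circ\gamma_{n+1}$ need not be a polynomial and may acquire a substantial high-frequency tail. Controlling this pits two requirements against each other: $\gamma_{n+1}$ should be close enough to the identity in the uniform metric to barely move the partial polynomial $P_1+\cdots+P_n$ (whose Lipschitz constant is under control, its degree being bounded), yet it must distort $g_n$ enough to organise its oscillations into the next block. So the construction cannot be a clean step-by-step induction: the mesh refinements, the block endpoints $N_k$, and the amplitudes $\|g_n\|_\infty$ must be chosen in coordination, with the final decomposition in mind; and one must separately check that the infinite composition converges to an \emph{injective} map, i.e.\ to a genuine homeomorphism rather than a mere continuous surjection. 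Finally, the obstruction is intrinsic, not an artefact of the method: a typical continuous $f$ (say one whose own Fourier partial sums are unbounded) admits no such decomposition, and one cannot instead try to make $f\circ h$ of bounded variation or uniformly smooth, since total variation is invariant under reparametrisation and a homeomorphism can contract one arc only at the cost of expanding another. The argument must exploit exactly the gap between uniform convergence of Fourier series and these stronger, unattainable, properties.
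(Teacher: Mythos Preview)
Your text is an outline rather than a proof, and the step you label the ``engine'' is asserted but never carried out; as stated it does not work. You claim that, given the remainder $g_n$, one can choose a homeomorphism $\gamma_{n+1}$ close to the identity so that $g_n\circ\gamma_{n+1}$ agrees, up to a uniform error that is a fixed fraction of $\|g_n\|_\infty$, with a polynomial whose spectrum lies entirely in a high block $[N_n,N_{n+1}]$. But a near-identity change of variable cannot remove low-frequency content. The cleanest obstruction is the zeroth coefficient: any polynomial with spectrum in $\{|m|\ge N_n\}$, $N_n\ge1$, has mean zero, hence takes a nonpositive value somewhere, so $\|c-P\|_\infty\ge c$ for every constant $c>0$; thus if $g_n$ is (close to) a nonzero constant, no $\gamma$ and no such $P$ give $\|g_n\circ\gamma-P\|_\infty<\|g_n\|_\infty$. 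You might reply that the mean of $g_n$ can be absorbed into $P_1$, but the next composition destroys this: $\int g_n\circ\gamma_{n+1}=\int g_n\,(\gamma_{n+1}^{-1})'$, and the ``speed up / slow down'' you need makes $(\gamma_{n+1}^{-1})'$ far from $1$ even when $\gamma_{n+1}$ is $C^0$-close to the identity. The same issue recurs at every frequency below $N_n$. Your ``main obstacle'' paragraph correctly identifies the tension between keeping $\gamma_{n+1}$ near the identity (to preserve $P_1+\cdots+P_n$) and making it distort $g_n$ enough to push its energy upward, but you never resolve it; saying the parameters must be ``chosen in coordination'' and that one must ``separately check'' injectivity is a description of what remains to be done, not a proof.

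The paper's argument is entirely different and sidesteps all of this. One assumes $f>0$, forms the star-shaped Jordan domain $\Omega=\{re^{i\theta}:r<f(\theta)\}$, and lets $F$ be a conformal map of the unit disc onto $\Omega$. By Carath\'eodory's theorem $F$ extends to a boundary homeomorphism, so $|F(e^{it})|=f(h(t))$ for some circle homeomorphism $h$. Because $\Omega$ has finite area, $F$ has finite Dirichlet integral, which places its boundary values in the Sobolev space $W^{1/2,2}$; then $|F|=f\circ h$ lies in $W^{1/2,2}$ as well, and any continuous function in $W^{1/2,2}$ has uniformly convergent Fourier series. There is no iteration, no block decomposition, and no need to balance competing constraints.
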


\begin{proof}
[Proof sketch]Let $f>0$. Consider the `star domain' $\Omega\coloneqq\{z=re^{i\theta},\ r<f(\theta)\}$
in $\mathbb{C}$. Let $F(z)$ be the conformal map from $D$ to $\Omega$.
Since the boundary of the domain $\Omega$ is a Jordan curve, by Caratheodory's
theorem, the map $F$ is a homeomorphism of the boundaries of $D$
and $\Omega$. In other words, $|F(e^{it})|=f(h(t))$ for some homeomorphism
$h$. The essence of the P\'al-Bohr Theorem is to show that $F$
belongs to the Sobolev space $W^{1/2,2}$ (half a derivative in $L^{2}$),
and so does $|F|$. Any continuous function in $W^{1/2,2}$ has a
uniformly converging Fourier series.
\end{proof}
The homeomorphism $h$ in this proof is, in general, singular. 

\begin{probs*}[N. N. Luzin]\leavevmode

\begin{enumerate}
\item Is it possible for any $f$ to find an \textit{absolutely continuous}
homeomorphism $h$ such that $f\circ h$ has a uniformly convergent
Fourier series?
\item Is it possible to find an $h$ so that the superposition has absolutely
convergent Fourier series? Equivalently, $f\circ h$ is in the Wiener
Algebra, i.e.\ in the space of all $f$ whose Fourier coefficients
are in $l^{1}(\mathbb{Z})$?
\end{enumerate}
\end{probs*}

Both problems first appeared in print in Nina Bari's book \emph{Trigonometric
series} \cite[page 330]{B64}. The second problem was resolved negatively
in 1981 \cite{O81}.
\begin{thm}
\label{thm:Luzin1}There exist a real function $f\in C(\mathbb{T})$
such that $f\circ h$ is not in the Wiener Algebra whenever $h$ is
any homeomorphism of $\mathbb{T}$. 
\end{thm}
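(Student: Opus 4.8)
The plan is to reduce the problem to a purely combinatorial one and then to a quantitative lower bound for the Wiener norm. First I would observe that a homeomorphism $h$ of $\mathbb{T}$ preserves the ``oscillation tree'' of $f$: since $h$ is monotone it carries monotone pieces to monotone pieces and fixes the set of attained values, so the cyclic list of local maxima and minima of $f\circ h$, together with their values and their nesting, is the same as that of $f$; conversely two continuous functions with the same oscillation tree differ by a homeomorphism. Hence it suffices to produce a single combinatorial ``shape'' --- amplitudes $a_j\downarrow 0$ at depth $j$ and a branching number $N_j$ --- such that \emph{every} continuous function realising this shape has $\sum_n|\hat g(n)|=\infty$. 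I would use a nested, Cantor-type shape: the single depth-$0$ piece rises by $a_1$, stays elevated while hosting $N_1$ depth-$1$ pieces, and descends by $a_1$; each depth-$j$ piece does likewise with amplitude $a_{j+1}$ and $N_{j+1}$ children; the oscillation regions at depth $j$ occupy only a small fraction of each parent arc. Since $a_j\to0$, the realisation $f$ is automatically continuous no matter how slowly the $a_j$ decay, and I would choose $N_j$ growing extremely fast and $a_j$ decaying slowly enough that $a_j\log\prod_{i<j}N_i\to\infty$.

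Now let $g=f\circ h$ be an arbitrary realisation. Two rigid features of a nested shape are: (i) total length is finite, so at depth $j$ a large proportion of the $\prod_{i<j}N_i$ pieces must be strongly compressed; and (ii) inside each piece the rising part is immediately followed by a plateau which has to accommodate the whole descendant structure below it, hence cannot be too short. Combining these I would show that for each $j$ some depth-$j$ piece forces $g$ to make a monotone increment of size $\gtrsim a_j$ over a microscopic width $w_j\le\bigl(\prod_{i<j}N_i\bigr)^{-1}$, flanked on one side by an elevated stretch that is long compared with $w_j$. For such a configuration the conjugate function $\widetilde g$ is, near the near-jump, of size $\gtrsim a_j\log(1/w_j)$; since $\|g\|_A\ge\|\widetilde g\|_\infty$ and $a_j\log(1/w_j)\ge a_j\log\prod_{i<j}N_i\to\infty$, this gives $\|g\|_A=\infty$. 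A more robust variant, which also makes the argument self-contained, pairs $g$ with a single pseudomeasure $\varphi=\sum_{j\le M}\varepsilon_j\psi_j$ where each $\psi_j$ is a conjugate-Dirichlet (``truncated Hilbert'') kernel localised near the depth-$j$ near-jump on a frequency band of size $\sim 1/w_j$, the signs $\varepsilon_j$ chosen so the pairings reinforce; because $N_j$ grows so fast these bands are pairwise disjoint, so the pseudomeasure norm of $\varphi$ stays $\lesssim 1$ while $\langle g,\varphi\rangle\gtrsim\sum_{j\le M}a_j\log(1/w_j)\to\infty$ as $M\to\infty$, whence $\|g\|_A=\infty$.

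The heart of the matter, and the step I expect to be the genuine obstacle, is (ii): showing that a homeomorphism cannot defuse this mechanism. An adversarial $h$ will try to keep every depth-$j$ piece moderately wide so that no near-jump is created, and, failing that, to turn a would-be step into a symmetric bump --- for which $\widetilde g$ is only $O(a_j)$, with no logarithmic gain. Defeating this is precisely why $N_j$ must grow so fast and $a_j$ so slowly: one must prove that the length budget is genuinely exhausted at infinitely many depths at once, that the plateau following a forced near-jump is forced to be long relative to $w_j$, and that the lower bound $\|\widetilde g\|_\infty\gtrsim a_j\log(1/w_j)$ is insensitive to whatever microscopic profile $h$ imposes on the near-jump. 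The remaining points are routine: that $f$ is continuous and realises exactly the intended oscillation tree, and --- since $\|g\|_A$ may a priori be infinite --- that the ``sum over depths'' is made rigorous by truncating $\varphi$ at depth $M$ and letting $M\to\infty$.
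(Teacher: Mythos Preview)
Your opening observation --- that a homeomorphism preserves the oscillation tree, so the problem reduces to a lower bound for the Wiener norm that depends only on a combinatorial shape --- is exactly right and is also the starting point of the paper's approach. The gap is in the lower-bound mechanism you propose.

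Your engine is the inequality $\|g\|_A\ge\|\widetilde g\|_\infty$ (or, equivalently, pairing $g$ with a truncated Hilbert kernel as a pseudomeasure), fed by a ``near-jump followed by a long one-sided plateau.'' But the adversarial homeomorphism can refuse to create that configuration: it can make every depth-$j$ bump a \emph{symmetric} hat, with rise and fall of comparable width and an arbitrarily short plateau. There is no lower bound on the plateau width coming from the children --- the children can be compressed into any positive length --- so the claim in your item (ii), that the plateau ``cannot be too short,'' simply does not follow from the tree structure. For a symmetric bump the Hilbert transform (and the pairing with your $\psi_j$) is $O(a_j)$ with no logarithmic gain; you yourself flag this as ``the genuine obstacle,'' and the remedy you suggest (take $N_j$ huge, $a_j$ slowly decaying) does not touch it. More structurally, the tool $\|g\|_A\ge\|\widetilde g\|_\infty$ is too weak for this problem: already the model case $F(t)=\sin(Nt)\,\mathbbm{1}_{[0,1]}(t)$ has $\|\widetilde F\|_\infty=O(1)$ (a sine integral at the endpoints) while $\|F\|_A\asymp\log N$. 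So even when the abrupt-ending feature you want is present, conjugate-function testing can fail to detect it.

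The paper's route is different in kind. Its key lemma states that if $F$ vanishes outside an interval and on that interval $F(t)=\sin(Ng(t))$ for \emph{some} increasing $g$, then $\|F\|_A\ge K(\log N)^{\alpha}$, with absolute $K,\alpha>0$. The homeomorphism is absorbed into $g$, so the bound is automatically invariant under change of variable; no ``long plateau'' or asymmetry is needed. What drives the bound is not a single near-jump but the fact that $N$ full oscillations \emph{end abruptly} at zero; the proof uses Davenport's inequality (from the circle of ideas around Littlewood's conjecture), which is a genuine $\ell^1$-of-Fourier-coefficients estimate rather than an $L^\infty$ bound on the conjugate function. Building $f$ is then easy: plant, on disjoint intervals, blocks with $N_k\to\infty$ oscillations; any $f\circ h$ still has such blocks and the lemma makes each block contribute $(\log N_k)^{\alpha}$ to $\|f\circ h\|_A$.
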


Simultaneously, J. P. Kahane and Y. Katznelson \cite{KK81} proved
a complex version of the theorem namely construct a continuous $f:\mathbb{T}\to\mathbb{C}$
which cannot be brought into the Wiener Algebra by a homeomorphism.
Equivalently, there are two real functions $f$ and $g$ such that
for no homeomorphism $h$ one has that both $f\circ h$ and $g\circ h$
are in the Wiener algebra (thus the complex version is weaker).

Let us sketch the approach of \cite{O81}. The main lemma is 
\begin{lem*}
Suppose $F$ oscillates from $-1$ to $1$ $N$ times on an interval
$[0,\gamma]$. Namely, suppose
\[
F(t)=\sin(Ng(t))\quad\forall0\le t\le\gamma
\]
for some $g$ increasing on $[0,\gamma]$, $g(0)=0$, $g(\gamma)=1$.
And suppose $F$ vanishes outside $[0,\gamma]$. Then $||F||_{A}>K\log^{\alpha}N$
where $K$ and $\alpha>0$ are absolute constants.
\end{lem*}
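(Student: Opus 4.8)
The plan is to bound $\|F\|_{A}=\sum_{n\in\mathbb{Z}}|\widehat F(n)|$ from below by exhibiting either a sequence $(w_{n})$ with $\|w\|_{\ell^{\infty}}\le1$ and $\sum_{n}w_{n}\widehat F(n)\gtrsim\log^{\alpha}N$, or --- what amounts to the same thing --- $\gtrsim\log^{\alpha}N$ essentially disjoint blocks of frequencies each carrying Fourier mass $\gtrsim c$ of $F$. The conceptual point is that $F$ is at once \emph{localised} (supported in an interval of length $\gamma$, so its spectrum is forced down to scale $1/\gamma$) and \emph{undamped}: it attains the extreme values $\pm1$ on every one of its $\sim N$ oscillations, in particular on the first and the last, so there is no room for a smooth taper at the ends of $[0,\gamma]$. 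A harmonic of frequency $\nu$ cut off sharply to an interval of length $\gamma$ spreads Fourier mass like $|n-\nu|^{-1}$ over the $\asymp\log(\nu\gamma)$ octaves between $1/\gamma$ and $\nu$; and since $g$ increases by $1$ across $[0,\gamma]$ the mean local frequency is $N/\gamma$, so the relevant ``$\nu\gamma$'' is $\gtrsim N$. It is this interplay of sharp truncation and fast oscillation that produces the logarithm.

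Write $\theta=Ng$, an increasing bijection $[0,\gamma]\to[0,N]$, so that $F=\sin\theta$ on $[0,\gamma]$, $F\equiv0$ elsewhere, and
\[
\widehat F(n)=\frac{1}{4\pi i}\int_{0}^{\gamma}\bigl(e^{i(\theta(t)-nt)}-e^{-i(\theta(t)+nt)}\bigr)\,dt .
\]
First I would reduce to $g$ piecewise linear and increasing (or $C^{2}$ with monotone derivative). This is legitimate for a \emph{lower} bound: it suffices to bound $\|S_{R}F\|_{A}\le\|F\|_{A}$, and $\|S_{R}F-S_{R}\widetilde F\|_{A}\le(2R+1)\|F-\widetilde F\|_{\infty}\le(2R+1)N\|g-\widetilde g\|_{\infty}$, so replacing $g$ by an increasing $\widetilde g$ (same values $0,1$ at $0,\gamma$) close enough in the sup norm --- with $R$ taken large depending on $\widetilde g$ so that the tail of $\widehat{\widetilde F}$ beyond $R$ is negligible --- changes $\|S_{R}F\|_{A}$ by $o(\log^{\alpha}N)$. (If $N\notin\pi\mathbb{Z}$ then $F$ is discontinuous and $\|F\|_{A}=\infty$, so one may assume $N\in\pi\mathbb{Z}$.)

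The analytic heart is an estimate of $\widehat F(n)$ via van der Corput's lemma and stationary phase. The phase $\theta(t)-nt$ is stationary exactly where the local frequency $\theta'(t)=Ng'(t)$ equals $n$; the stationary points give a \emph{bulk} contribution of amplitude $\asymp(N|g''(t_{n})|)^{-1/2}$ spread over the band of local frequencies, while the non-stationary endpoint terms at $t=0,\gamma$ give a \emph{boundary} skirt $\asymp|n-\nu|^{-1}$ reaching from the band down to $|n|\asymp1/\gamma$. A dichotomy of the following kind then finishes the argument. If the $\sim N$ oscillations are distributed over $\gtrsim\log^{\alpha}N$ distinct octaves of local frequency, I localise $F$ by smooth cut-offs $\varphi_{J}$, one adapted to a subinterval $J$ from each such octave; since $\|\varphi_{J}\|_{A}\lesssim1$ one has $\|F\|_{A}\ge\|F\varphi_{J}\|_{A}\ge\|F\varphi_{J}\|_{\infty}\gtrsim1$, and as the $\widehat{F\varphi_{J}}$ live in essentially disjoint spectral blocks these bounds add. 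Otherwise a single octave of local frequency carries $\gtrsim N/\log^{\alpha}N$ of the oscillations, and the corresponding near-monochromatic, sharply truncated portion of $F$ is modelled on the truncated harmonic $\sin(\nu t)\mathbf{1}_{[0,\gamma']}$, whose $A$-norm is $\gtrsim\log(\nu\gamma')\gtrsim\log N-O(\log\log N)$ by the truncation-skirt estimate above.

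I expect the main obstacle to be converting the stationary-phase heuristics into genuine lower bounds --- controlling the cancellation among the bulk and boundary contributions and among the $\sim N$ humps, \emph{uniformly} over all increasing $g$, in particular when oscillations of many different scales are interleaved in space so that no single spectral band is spatially localised. The near-extremal $g$ are the nearly linear ones, where the whole bound has to be extracted from the skirt of what is essentially one Fourier peak; handling the general case robustly is lossy, which is why one ends up with $\log^{\alpha}N$ for some $\alpha\in(0,1)$ rather than the heuristically sharp $\log N$.
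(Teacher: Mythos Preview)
Your approach is genuinely different from the paper's. The paper does not argue via stationary phase or a spectral dichotomy at all; it quotes \cite[Theorem~3.2]{O85}, whose proof rests on an inequality of Davenport from the circle of ideas around Cohen's attack on the Littlewood $L^{1}$ conjecture. That method is combinatorial: one groups the $N$ sign changes of $F$ hierarchically and uses a Cohen-tree type induction to force $\|\widehat F\|_{\ell^{1}}\gtrsim(\log N)^{\alpha}$, with no appeal to local frequencies or endpoint skirts. The connection is natural, since ``a function in $A(\mathbb T)$ with small norm cannot oscillate $N$ times'' is essentially the dual of ``an idempotent exponential sum on $N$ frequencies has large $L^{1}$ norm.''

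There are, however, two real gaps in your scheme that I do not see how to close without importing something of comparable strength to Cohen--Davenport. First, the reduction to $g'$ monotone is not justified: approximating a general increasing $g$ by a piecewise linear increasing $\widetilde g$ is fine, but there is no reason the slopes can be taken monotone, and without that the set where the local frequency lies in a given octave may be a complicated union of intervals --- precisely the interleaving you flag at the end. Your Case~1 tacitly needs each octave to correspond to a single spatial interval $J$ so that the $\widehat{F\varphi_{J}}$ sit in disjoint bands; your Case~2 needs the near-monochromatic region to be contiguous. Second, even granting monotone $g'$, Case~2 breaks as written: to extract the near-monochromatic piece via the Banach-algebra inequality $\|F\|_{A}\ge\|F\varphi\|_{A}/\|\varphi\|_{A}$ you must take $\varphi$ smooth, but then $F\varphi$ is a \emph{smoothly} truncated wave with $\|F\varphi\|_{A}=O(1)$ --- the logarithm in $\|\sin(\nu t)\mathbf 1_{[0,\gamma']}\|_{A}\asymp\log(\nu\gamma')$ comes exactly from the sharp cut, which the multiplier destroys. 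The sharp truncation you can legitimately use is only at $t=0$ and $t=\gamma$, but the dominant octave need not sit at either endpoint.

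In short, the obstacle you name in your last paragraph is not a technicality but the heart of the matter: controlling cancellation uniformly over all increasing $g$, especially when scales are spatially interleaved, is essentially the Littlewood difficulty, and that is why the paper reaches for Davenport's inequality rather than stationary phase.
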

Here $||F||_{A}=\sum|\widehat{F}(n)|$. It is instructive to consider
the lemma even for $g$ linear. One sees that what makes the $A$-norm
large is that the oscillations `end abruptly'. Had they been allowed
to reduce gradually to zero, the $A$ norm could have been made constant.

The proof \cite[Theorem 3.2]{O85} uses an inequality of Davenport.
Let us remark that Davenport proved his inequality as part of an improvement
of a result of Paul Cohen on Littlewood's conjecture on trigonometric
sums (incidently, that conjecture was proved at approximately the
same time, see \cite{K81,MPS81}).

\section{Singularity of the homeomorphism}

The original proof of the P\'al-Bohr theorem did not relate directly
to the singularity of the homeomorphism, as it came about as the boundary
value of a conformal map. It could be regular or singular, depending
on the function $f$ (certainly, it is singular in most cases of interest).

Contrariwise, there are proofs of the P\'al-Bohr theorem which do
not use conformal maps at all, and construct the homeomorphism using
the basic idea that a strongly singular homeomorphism can `push' all
the problem areas of the function $f$ into an area too small to be
noted by the Fourier expansion. We mention two such results, the first
due to A. Saakyan \cite{S79}.
\begin{thm}
\label{thm:Saakyan}Let $f$ be a real function in $C(\mathbb{T})$.
Then there is a homeomorphism $h$ such that $\widehat{f\circ h}(n)=o(1/n)$. 

Further, instead of $\{1/n\}$ one can take any decreasing sequence
$a(n)>n^{-3/2}$, which is not in $l^{1}$. 
\end{thm}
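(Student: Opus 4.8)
I would follow the route indicated just before the statement: build a strongly singular homeomorphism that squeezes the oscillation of $f$, scale by scale, into a sparse family of very short intervals, which the Fourier transform essentially cannot resolve. It is convenient to encode $h$ through the push-forward measure $\nu:=h_{*}(\mathrm{Leb})$ on $\mathbb{T}$; then $h^{-1}$ is the distribution function of $\nu$, the map $h$ is a homeomorphism exactly when $\nu$ is non-atomic of full support, and the substitution $y=h(x)$ yields
\[
  \widehat{f\circ h}(n)=\int_{\mathbb{T}}f(y)\,e^{-2\pi i n\,h^{-1}(y)}\,d\nu(y).
\]
Since $f$ is uniformly continuous, fix a rapidly decreasing sequence $\eta_{m}\downarrow0$ with $\operatorname{osc}(f;I)\le2^{-m}$ whenever $|I|\le\eta_{m}$, together with nested partitions $\mathcal{P}_{1}\prec\mathcal{P}_{2}\prec\cdots$ of $\mathbb{T}$ into consecutive arcs of length $\le\eta_{m}$. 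The homeomorphism will be produced from a multiplicative cascade $\nu$ subordinate to $\{\mathcal{P}_{m}\}$: the mass of an arc of $\mathcal{P}_{m-1}$ is distributed among its children in $\mathcal{P}_{m}$ by weights still to be selected, the point being to make these splittings so lopsided that each arc $B\in\mathcal{P}_{m-1}$ has image $h^{-1}(B)$ of very small length, and the subdivision points $q_{j}^{(m)}:=h^{-1}(\text{right endpoint of the }j\text{-th arc of }\mathcal{P}_{m})$ land in ``lacunary'' positions.

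The structural part of the argument is a clean reduction. Given $n$, let $m=m(n)$ be the scale assigned to $n$ (the one with $2^{cm}\le|n|<2^{c(m+1)}$, for a fixed $c<\tfrac23$), replace $f$ by the step function equal to $f(c_{j}^{(m)})$ on the $j$-th arc of $\mathcal{P}_{m}$ — an error bounded by $2^{-m(n)}$, which is $o(a(n))$ — and apply summation by parts; this gives
\[
  \widehat{f\circ h}(n)=\frac{1}{2\pi i n}\sum_{j}\delta_{j}^{(m)}\,e^{-2\pi i n\,q_{j}^{(m)}}+O\bigl(2^{-m(n)}\bigr),\qquad\delta_{j}^{(m)}:=f\bigl(c_{j+1}^{(m)}\bigr)-f\bigl(c_{j}^{(m)}\bigr).
\]
Everything then reduces to estimating the trigonometric sum $\sum_{j}\delta_{j}^{(m)}e^{-2\pi i n q_{j}^{(m)}}$ by $o(|n|\,a(n))$, even though $\sum_{j}|\delta_{j}^{(m)}|$ may be unbounded as $m\to\infty$. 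Here the cascade is meant to be exploited in a telescoping fashion: grouping the arcs of $\mathcal{P}_{m}$ under a common ancestor $B$ at a coarser level, the phases $e^{-2\pi i n q_{j}^{(m)}}$ vary over $B$ by at most $O(|n|\,\nu(B))$ while the weights $\delta_{j}^{(m)}$ collapse to $f(\text{end of }B)-f(\text{start of }B)$; iterating this across scales reduces the sum, up to errors $\lesssim|n|\,(\max_{B}\nu(B))\sum_{j}|\delta_{j}^{(m)}|$, to one to which a Davenport-type trigonometric-sum inequality (in the spirit of \cite{O85}) applies. The quality of that estimate — in effect a square-root gain — is what pins the admissible rate at $a(n)>n^{-3/2}$.

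The step I expect to be the real difficulty is making a single cascade do all of this at once: the subdivision points at a fine scale are rigidly dictated by the splittings chosen at every coarser scale, so the requirement of killing the sum at frequencies near $2^{cm}$ has to hold simultaneously at the infinitely many scales $m$, with one choice of weights. I would expect the margin that makes these requirements compatible to be exactly the hypothesis $a\notin\ell^{1}$, which by Theorem~\ref{thm:Luzin1} is necessary in any case: for some $f$ no homeomorphism brings $f\circ h$ into the Wiener algebra. A second, genuinely delicate point is verifying that the limiting object is an honest homeomorphism. Since $f\circ h$ cannot be of bounded variation when $f$ is not (total variation is a reparametrisation invariant), $h$ must be singular, so there is no bi-Lipschitz control; instead one has to read off directly from the cascade that $\nu$ has full support and no atoms. (If one prefers to iterate — a one-block lemma stating that a small perturbation of the identity makes $\widehat{f\circ h}(n)$ small on a prescribed dyadic block, composed over all blocks — the same homeomorphism-limit issue appears, and is handled by keeping both $\lVert h_{k}-\mathrm{id}\rVert_{\infty}$ and $\lVert h_{k}^{-1}-\mathrm{id}\rVert_{\infty}$ summable.)
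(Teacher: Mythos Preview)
Your route is not the one the paper (following Saakyan) takes, and the discrepancy is structural rather than cosmetic. The paper's proof works in the Schauder basis of dyadic triangle (``hat'') functions in $C(\mathbb{T})$: one constructs $h$ so that the Schauder expansion of $f\circ h$ is \emph{lacunary}, meaning that at each dyadic level exactly one triangle function carries a nonzero coefficient. Once this is achieved, the Fourier estimate is immediate: a single triangle of base $2^{-m}$ and height $\le\omega_f(2^{-m})$ contributes $O(\omega_f(2^{-m})\min(2^{-m},2^{2m}/|n|^{3}))$ to $\widehat{f\circ h}(n)$, and summing one such term per level gives $o(a(n))$ for any decreasing $a(n)>n^{-3/2}$ with $a\notin\ell^1$. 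The homeomorphism is built directly to force this lacunarity --- roughly, at each stage one stretches the single subinterval on which $f$ still genuinely oscillates to occupy half the available length, collapsing the rest --- and no trigonometric-sum estimate beyond the Fourier transform of a single hat function is needed.

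By contrast, your cascade-plus-summation-by-parts scheme leaves the essential work undone. After the Abel step you face $\sum_j\delta_j^{(m)}e^{-2\pi i n q_j^{(m)}}$ with $\sum_j|\delta_j^{(m)}|$ unbounded, and you propose to beat this by telescoping to coarser ancestors together with a ``Davenport-type'' inequality. Two problems: first, the telescoping error you write down is $|n|\cdot(\max_B\nu(B))\cdot\sum_j|\delta_j^{(m)}|$, and there is no mechanism in a generic cascade that makes $\max_B\nu(B)$ small enough to kill the growing variation --- this is precisely the tension you flag but do not resolve. Second, Davenport's inequality in this circle of ideas is a \emph{lower} bound for $\lVert\cdot\rVert_A$ (used in the paper for Theorem~\ref{thm:Luzin1}, the negative result on the Wiener algebra), not an upper bound for exponential sums with arbitrary real nodes $q_j^{(m)}$; invoking it here for a square-root gain is a hope, not a tool. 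The Schauder-basis argument sidesteps all of this by arranging that at every scale there is only \emph{one} jump to sum, so no cancellation is required.
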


Note that the P\'al-Bohr theorem follows since a continuous function
with Fourier coefficients $o(1/n)$ has a uniformly converging Fourier
series. Further, even the $W^{1/2,2}$ formulation follows, since
if one takes $a(n)=1/n\log n$ (which satisfies the requirements of
the theorem) then one gets $\widehat{F}(n)=o(1/n\log n)$ which implies
$F\in W^{1/2,2}$.

The proof of Saakyan's theorem uses the classical Schauder basis of
triangle functions (with dyadic bases) in the space $C(\mathbb{T})$.
The core of the proof is showing that for a given $f$ one can construct
a homeomorphism $h$ so that the superposition $f\circ h$ has a `lacunary'
Schauder decomposition, in the sense that the support of a triangle
of a given dyadic length appears only once. It is easy to see that
this property implies the estimate of $\widehat{f\circ h}$. 

The second result we mention is due to Kahane and Katznelson \cite[Th\'eor\`eme 3]{KK83}.
\begin{thm}
\label{thm:KKKompact}For each compact set $K\subset C(\mathbb{T})$
there is a universal homeomorphism $h$ such that $f\circ h$ has
a uniformly convergent Fourier series, whenever $f$ is in $K$.
\end{thm}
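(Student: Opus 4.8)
The plan is to build one homeomorphism $h$ for which $\sup_{f\in K}|\widehat{f\circ h}(n)|=o(1/n)$; since a continuous function with Fourier coefficients $o(1/n)$ has a uniformly convergent Fourier series, this yields the theorem, with convergence even uniform in $f\in K$. The first step is to unpack the compactness of $K$. By Arzel\`a--Ascoli, $K$ is bounded and equicontinuous, so there is a single modulus of continuity $\omega$, $\omega(\delta)\downarrow 0$ as $\delta\downarrow 0$, valid simultaneously for every $f\in K$; moreover $K$ is totally bounded, so at each scale it has a finite net, and one may freely replace $K$ by $K-K$ (still compact) when convenient. It is also worth noting at the outset that $\{f\circ h:f\in K\}$ is a continuous image of $K$, hence compact in $C(\mathbb T)$, hence equicontinuous; so any modulus bound obtained for the superpositions is automatically uniform in $f$.

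The engine would be the Faber--Schauder (triangle) system used in the proof of Theorem~\ref{thm:Saakyan}. For a single $f$ one can choose $h$ so that $f\circ h$ has a \emph{lacunary} expansion in this system --- one triangle per dyadic scale --- and I would record the elementary estimate that converts such an expansion into $o(1/n)$ decay: writing $g=g_N+(g-g_N)$ with $g_N$ the level-$\le N$ partial sum (a single piecewise-linear function) and taking $m\asymp 2^N$, the tail triangles have total support $O(2^{-N})$ so $|\widehat{g-g_N}(m)|\le\|g-g_N\|_1\le C\,\omega_g(2^{-N})\,2^{-N}=o(1/m)$, while $|\widehat{g_N}(m)|\le Cm^{-2}\operatorname{Var}(g_N')\le Cm^{-2}\sum_{n\le N}2^{n}\omega_g(2^{-n})=o(1/m)$ since $\sum_{n\le N}2^{n}\omega_g(2^{-n})=o(2^{N})$ whenever $\omega_g(2^{-n})\to 0$. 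For the family it then suffices to make the superpositions \emph{approximately} lacunary: for every $f\in K$, $f\circ h$ should have, at each dyadic scale $n$, at most one Faber--Schauder coefficient of unrestricted size and all the others of size $o(2^{-n})$ uniformly in $f$; the same two estimates, split between this single ``diagonal'' triangle per scale and the small remainder, then close the argument.

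The construction of $h$ I would organise as an iteration $h=\lim_j h_j$, the $h_j$ piecewise linear, adapted to a very rapidly increasing sequence of dyadic scales $N_1\ll N_2\ll\cdots$ interleaved with finite nets $\mathcal F_1\subset\mathcal F_2\subset\cdots$ exhausting a dense subset of $K$. At step $j$ one performs a ``squeezing'' move on the current partition: every piece but one (per scale) has its image compressed to a minuscule arc --- of length $\tau_{N_j}$ chosen so small in advance that $\omega(\tau_{N_j})$ beats whatever $o(2^{-N_j})$ bound is needed --- the compression being nested and self-similar so that a piece squeezed at one scale keeps shrinking at all later scales; the one surviving piece per scale is then massaged, as in Saakyan's argument, to carry the single large triangle, and here only the finitely many functions of $\mathcal F_j$ must be handled at once. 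Because squeezing is independent of $f$, the common modulus $\omega$ forces the ``tiny'' coefficients to be tiny for every $f\in K$ simultaneously. One passes from the dense set to all of $K$ by continuity, together with a uniform bound $\sup_N\sup_{g\in K-K}\|S_N(g\circ h)\|_\infty<\infty$ of exactly the kind the construction also delivers when run on the compact set $K-K$.

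The main obstacle is the construction itself, and in particular making it uniform over the \emph{infinite-dimensional} compact set $K$: one cannot treat its elements one at a time, so the entire scheme must be driven by the single modulus $\omega$, with the finite nets entering only locally in scale. Two demands pull against each other and have to be reconciled by a careful choice of the scales $N_j$, the compression lengths $\tau_{N_j}$ and the net sizes: the squeezing must be drastic enough that the off-diagonal Faber--Schauder coefficients of $f\circ h$ really are $o(2^{-n})$ for every $f\in K$, yet gentle enough --- and nested enough across the super-lacunary scales --- that the compressions asked for at different scales stay mutually consistent and the limiting $h$ is still a strictly increasing homeomorphism, collapsing no interval to a point. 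Getting all the error series to telescope under these competing constraints is the heart of the matter.
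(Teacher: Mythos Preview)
Your plan differs substantially from the paper's (which follows the original Kahane--Katznelson argument): they do not aim for any decay rate on the coefficients, but simply construct $h$ so that $f\circ h$ is close to a constant on every complementary interval of a fixed Cantor set, using only the common modulus of continuity of $K$, and deduce uniform convergence directly from that geometric picture. You instead try to export Saakyan's lacunary-Schauder machinery to the whole family and extract the stronger conclusion $\sup_{f\in K}|\widehat{f\circ h}(n)|=o(1/n)$.

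There is, however, a structural gap in the construction you describe. In Saakyan's proof for a \emph{single} $f$, the non-exceptional dyadic intervals are not squeezed in the image; rather, $h$ is shaped on each of them, \emph{using the specific $f$}, so that the Schauder coefficient of $f\circ h$ vanishes there. That is why his $h$ remains a homeomorphism even though only one interval per scale is exceptional: the non-exceptional intervals can still carry large image measure. You cannot do this $f$-dependent shaping uniformly over $K$, so you replace it by the $f$-independent device of compressing every non-exceptional piece to an arc of length $\tau_n$ so short that $\omega(\tau_n)$ is tiny. But then the single surviving interval at scale $n$ must absorb essentially all of the image: it has image length $1-\sum_{k\le n}\tau_k$ while its domain length is $2^{-n}\to 0$, so for the limit to be a homeomorphism you are forced to have $\sum_n\tau_n=1$. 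On the other hand your tail estimate $\|g-g_N\|_1=o(2^{-N})$ requires the non-exceptional Schauder coefficients at scale $n$ to be $o(2^{-n})$, hence $\omega(\tau_n)=o(2^{-n})$, and for a generic modulus (e.g.\ $\omega(\delta)=1/\log(1/\delta)$) this already forces $\sum_n\tau_n\ll 1$. Even when the two constraints can nominally coexist, the scale-$n$ sub-intervals sitting \emph{inside} the already-squeezed pieces $A_k$ --- which between them must carry the full image measure --- contribute $\Theta(2^N)$ rather than $o(2^N)$ to $\operatorname{Var}(g_N')$, so $|\widehat{g_N}(m)|$ is only $O(1/m)$. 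The natural repair, letting many pieces per scale survive so that the surviving set is a Cantor set carrying full image measure, is precisely what Kahane--Katznelson do; but then the expansion is nowhere near lacunary and the $o(1/n)$ route collapses. It is perhaps not an accident that the paper remarks, immediately after stating this theorem, that Saakyan's theorem and this one ``cannot be combined''.
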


The proof constructs $h$ such that $f\circ h$ is close to being
a constant on any interval in the complement of a Cantor set.

It is interesting to note that theorems \ref{thm:Saakyan} and \ref{thm:KKKompact}
cannot be combined. Indeed, a result of Kahane and Katznelson shows
that there exists a sequence $a$ not in $l^{1}$ such it is not even
possible to map two functions simultaneously into the space of functions
satisfying $\widehat{f}(n)=o(a(n))$. See \cite[Theorem 4.3]{O85}.

Returning to Luzin's second question, let us mention a result of Kahane
and Katznelson \cite[Exemple 5]{KK83} that shows that the homeomorphism
in the P\'al-Bohr theorem cannot be much better than absolutely continuous.
\begin{thm}
\label{thm:no C1+eps}For every $\varepsilon>0$ there exists a function
$f$ such that for any homeomorphism $h\in C^{1+\varepsilon}$ with
$h'(0)\ne0$ it is not the case that $f\circ h$ has a uniformly converging
(or even uniformly bounded) Fourier expansion.
\end{thm}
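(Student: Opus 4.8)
Write $S_N g$ for the $N$th partial sum of the Fourier series of $g$. The plan is to produce a single real $f\in C(\mathbb{T})$ assembled from countably many \emph{localized Fej\'er blocks} accumulating near every point of a fixed countable dense set, so that for any admissible $h$ these blocks, pulled back through $h^{-1}$, still carry divergent partial sums. The block is built from the classical Fej\'er polynomial: with $\sigma_n(y)\coloneqq\sum_{j=1}^{n}\frac{\sin jy}{j}$ (bounded uniformly in $n,y$), the polynomial $2\sin(\Lambda y)\sigma_n(y)=\sum_{j=1}^{n}\frac1j[\cos((\Lambda-j)y)-\cos((\Lambda+j)y)]$ has sup-norm $\le4$, spectrum in $[\Lambda-n,\Lambda+n]$, and $S_\Lambda$ at $0$ equal to $\sum_{j=1}^n1/j$. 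I would truncate $x\mapsto2\sin(\Lambda(x-c))\sigma_n(x-c)$ by a fixed smooth bump supported in an interval $I$ of length $\ell$ about $c$; the resulting block $R_{I,\Lambda,n}$ is smooth, supported in $I$, inherits sup-norm $\le4$, has spectrum essentially in a band of width $\lesssim n+\ell^{-1}$ about $\Lambda$, and -- substituting into the Dirichlet integral and using $\sin s\approx s$ on the short support, so that $S_\Lambda R_{I,\Lambda,n}(c)=\tfrac1\pi\int\chi_I(c-s)\tfrac{\sigma_n(s)}{s}\,ds+O(1)=\log(n\ell)+O(1)$ -- satisfies $\sup_N|S_N R_{I,\Lambda,n}(c)|\ge c_0\log(n\ell)$ whenever $\Lambda\ge2n$ and $n\ell$ is large. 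Then $f\coloneqq\sum_{m,k}\delta_{m,k}R_{I_{m,k},\Lambda_{m,k},n_{m,k}}$, where the $q_m$ are the dyadic rationals ordered by level, $I_{m,k}$ sits at distance $\approx2^{-k}$ from $q_m$ (so the blocks near $q_m$ accumulate at $q_m$), and for large $k$ one may take $\ell_{m,k}=2^{-k-10}$, $n_{m,k}=\lfloor2^{k(1+\varepsilon/4)}\rfloor$, $\Lambda_{m,k}=2^{k(1+\varepsilon/2)}$, $\delta_{m,k}=\sigma_m/\log(k+2)$ with $\sigma_m\downarrow0$ slowly; the intervals are made pairwise disjoint with protected neighbourhoods. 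Since the supports are disjoint and the amplitudes tend to $0$ along every sequence of blocks shrinking to a point, $f\in C(\mathbb{T})$.

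Given $h\in C^{1+\varepsilon}$ with $h'(0)=\lambda\ne0$ (say $\lambda>0$), put $p\coloneqq h(0)$. By continuity of $h'$, on a fixed neighbourhood of $0$ the map $h$ is bi-Lipschitz, with $h'$ bounded away from $0$ and $\infty$ and $\varepsilon$-H\"older; hence $h^{-1}$ is bi-Lipschitz near $p$. I would pick dyadic rationals $q_{m_l}\to p$ with $|p-q_{m_l}|\approx2^{-k_l}$, $k_l\uparrow\infty$, and look at $I_l=I_{m_l,k_l}$: its preimage $J_l=h^{-1}(I_l)$ is an interval of length $\approx\ell_l/\lambda$ at distance $\tau_l\approx2^{-k_l}$ from $0$, on which $h(t)=(\text{affine in }t)+E_l(t)$ with the oscillation of $E_l$ at most $\lesssim\tau_l^{\varepsilon}\ell_l$. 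Two elementary facts then finish the estimate. First, $S_N$ is quasi-invariant under affine substitutions: for $g$ localized near a point, $S_N$ of the function $x\mapsto g(ax+b)$ at the preimage of the base point equals $S_{\lfloor N/a\rfloor}g$ at the base point, up to small errors (again, substitute in the Dirichlet integral). Second, by Bernstein's inequality the H\"older correction changes $\delta_l R_{I_l,\Lambda_l,n_l}$ on $J_l$ by $\lesssim\delta_l\Lambda_l\tau_l^{\varepsilon}\ell_l$ in sup-norm, hence changes any $S_N$ with $N\asymp\Lambda_l$ by $\lesssim\delta_l\Lambda_l\tau_l^{\varepsilon}\ell_l\log\Lambda_l$ (the Lebesgue constant being $\asymp\log\Lambda_l$). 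Since $h$ has affine part of slope $\lambda+O(\tau_l^{\varepsilon})$ on $J_l$, the first fact (with multiplier $\approx\lambda$) yields, for a suitable $N_l\approx\lambda\Lambda_l$ and $t_l=h^{-1}(c_l)$,
\[
S_{N_l}(f\circ h)(t_l)=\delta_l\,S_{\Lambda_l}(R_{I_l,\Lambda_l,n_l})(c_l)+O\!\bigl(\delta_l\Lambda_l\tau_l^{\varepsilon}\ell_l\log\Lambda_l\bigr)+(\text{cross-talk from the other blocks}).
\]
With the parameters above, $\Lambda_l\tau_l^{\varepsilon}\ell_l\log\Lambda_l\approx k_l\,2^{-k_l\varepsilon/2}\to0$, while the main term is $\asymp\delta_l\log(n_l\ell_l)\asymp\delta_l k_l\varepsilon$; if $\sigma_m\to0$ slowly enough -- a negative power of $\log m$ will do -- that $\delta_l k_l\varepsilon\to\infty$ along the extracted sequence (here one uses that the $q_m$ are ordered by level, bounding how large $m_l$ is), the main term wins, and -- modulo the cross-talk -- $\sup_N\|S_N(f\circ h)\|_\infty=\infty$, so $f\circ h$ has no uniformly convergent, hence no uniformly bounded, Fourier expansion.

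I expect the real work to lie in the two parenthetical caveats. Localizing the Fej\'er polynomial -- truncating it to a short interval by a smooth bump without killing the $\log(n\ell)$ blow-up of its partial sums, and without inflating its spectral footprint beyond a band of width $\lesssim n$ about $\Lambda$ (so the bump's spread $\approx\ell^{-1}$ must be kept $\lesssim n$, i.e.\ $n\ell\gtrsim1$) -- is a classical but fiddly computation. The genuinely delicate part is the global organization of the blocks: the scales, depths, frequencies, amplitudes and positions must be arranged to simultaneously keep $f$ continuous, leave the $\varepsilon$-room for the H\"older correction, keep $n\ell\to\infty$, make the frequency bands pairwise separated -- so that for the chosen index $N_l$ every competing block is either below the pass-band, or above it, or far from $t_l$, and so contributes $o(1)$ -- and enumerate the dense set so that $\delta_l$ along the extraction does not decay faster than $\log(n_l\ell_l)$ grows; bounding the cross-talk then combines this frequency separation with the decay of the Dirichlet kernel away from $t_l$. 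Finally, both hypotheses on $h$ enter essentially and exactly once: $h'(0)\ne0$ makes $h^{-1}$ bi-Lipschitz near $p$, so a tiny block pulls back to a tiny, nearly undistorted interval -- precisely the property a singular P\'al--Bohr homeomorphism lacks -- while $\varepsilon>0$ upgrades the reparametrization error to $O(\ell^{1+\varepsilon})$ and so lets the block depth $n$ grow with $\log(n\ell)\to\infty$; this is also why the threshold $C^{1+\varepsilon}$ is sharp against the P\'al--Bohr theorem.
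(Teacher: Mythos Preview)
Your approach shares the paper's core mechanism --- plant oscillatory blocks whose resonance with the Dirichlet kernel survives any $C^{1+\varepsilon}$ reparametrisation, because on an interval of length $\ell$ such an $h$ is affine up to $O(\ell^{1+\varepsilon})$, too small to dephase a wave of frequency $\ell^{-1-\varepsilon/2}$ --- but the implementation differs in two ways. First, the paper (following Kahane--Katznelson) uses plain sine waves with $k$ peaks on intervals $[a_k,ka_k]$, $a_k\downarrow0$ very fast; the $\log k$ blow-up at the accumulation point is simply $\int_{a_k}^{ka_k}dt/t=\log k$ when the Dirichlet frequency matches the wave, so your localised Fej\'er polynomials are heavier machinery than needed. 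Second, the paper's blocks accumulate only at the single point $0$, whereas yours accumulate at a dense set; this is what lets you cope with an arbitrary value of $h(0)$ (you locate blocks of $f$ near $p=h(0)$ and pull them back to a neighbourhood of $0$, where the hypothesis $h'(0)\ne0$ gives the bi-Lipschitz control), while the paper's sketch tacitly reads the statement with the normalisation $h(0)=0$ (equivalently, with the hypothesis placed at $h^{-1}(0)$). So your route is correct and in that one respect more faithful to the statement as literally written, but both the building block and the cross-talk book-keeping you flag as ``genuinely delicate'' become almost trivial in the paper's single-point, bare-sine construction: with one accumulation point the $a_k$ can be spaced to separate the blocks in both position and frequency as far as one likes.
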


The example consists in constructing a sequence $a_{1}>a_{2}>\dotsb>0$
decreasing very fast, and taking the function to be on every interval
$[a_{k},ka_{k}]$ a sine wave with $k$ peaks. The function is zero
outside these intervals (see figure \ref{fig:KK}). One can then check
that a smooth $h$ does not distort the picture (for large $k$, i.e.\ small
intervals) sufficiently to remove the resonance of the sine wave with
the Dirichlet kernel $D_{r}$, only to move around the exact $r$
for which this resonance occurs. 
\begin{figure}
\includegraphics[width=12cm]{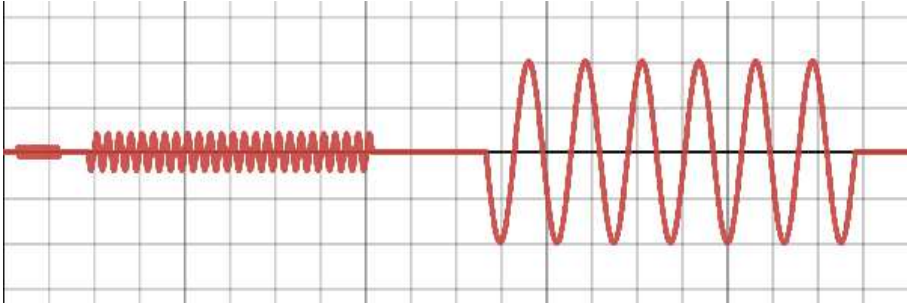} 

\caption{\label{fig:KK}The Kahane-Katznelson example.}
\end{figure}

We are now ready to present the main new result.
\begin{thm}
\label{thm:main}For every continuous real function $f$ there exists
an absolutely continuous homeomorphism $h$ such that Fourier series
of the superposition $f\circ h$ converges uniformly.
\end{thm}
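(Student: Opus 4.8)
The plan is to build $h$ as a uniform limit of absolutely continuous homeomorphisms $h_n$, in tandem with a sequence of functions $f_n\to f$ (uniformly), arranged so that at stage $n$ the quantity $\sup_{N\le N_n}\|S_N(f_n\circ h_n)-f_n\circ h_n\|_\infty$ is already under control for a threshold $N_n\uparrow\infty$. The glue between stages is the Lebesgue constant: since $\|S_N\varphi\|_\infty\le\|D_N\|_{L^1}\|\varphi\|_\infty$ with $\|D_N\|_{L^1}=O(\log N)$, if the parameters are chosen so that $\|f\circ h-f_n\circ h_n\|_\infty\log N_n\to 0$, then for $N\le N_n$ the partial sums of $f\circ h$ differ from those of $f_n\circ h_n$ by a negligible amount, and one also needs the stage-$m$ control to survive the later stages (which modify $h$ only at finer scales, hence change the low-frequency part of $f\circ h$ only by something high-frequency or small-amplitude). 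Letting $n\to\infty$ then gives uniform convergence for $f\circ h$, so the whole theorem reduces to the one-scale problem: given the configuration reached at stage $n$, perform surgery on $h_n$ (and refine $f_n$) so as to also tame the partial sums in the window $(N_n,N_{n+1}]$, at a cost that is summable in $W^{1,1}$.

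I would describe $f$ through a multiresolution decomposition (triangle/Schauder functions on dyadic scales, as in the proofs of Theorems \ref{thm:Saakyan} and \ref{thm:KKKompact}), taking $f_n$ to be the truncation keeping scales coarser than some $\delta_n\to 0$, so that $\|f-f_n\|_\infty\le\omega_f(\delta_n)$. Since $f$ is merely continuous, $\omega_f(\delta)\to 0$, so the oscillations that $f$ adds at fine scales are of small amplitude — this is the only regularity available, and it is essential, since it is precisely what makes the fine-scale surgeries produce small rather than merely bounded corrections. The surgery targets the features that, by the Kahane–Katznelson phenomenon behind Theorem \ref{thm:no C1+eps} and by Olevskii's lemma, are dangerous: runs of oscillations of roughly one frequency that begin or end abruptly, which resonate with a Dirichlet kernel and overshoot by $\sim(\text{amplitude})\times\log^{\alpha}(\text{length})$. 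Near each such feature I would modify $h_n$ on a tiny interval, compressing it and using the freed room to re-lay the oscillations so that the instantaneous frequency of $f_{n+1}\circ h_{n+1}$ no longer jumps — ramping it through intermediate scales, and where needed tapering the amplitude inside $f_{n+1}$ (which costs at most $\sim\omega_f(\delta_{n+1})$ in sup norm, hence nothing in the limit). Threading such a frequency ramp into a short interval forces $h'$ to swing wildly at that scale, which a $C^{1+\varepsilon}$ homeomorphism — essentially affine on small intervals — cannot do, but an absolutely continuous one can; and because the intervals and their $h_n$-images are short, the change in $h_n'$ is small in $L^1$, so $\sum_n\|h_{n+1}'-h_n'\|_{L^1}$ can be kept finite and $h=\lim h_n$ is absolutely continuous.

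The analytic core is the estimate that makes the surgery effective: if $g$ has a regular instantaneous-frequency profile — no abrupt onsets or cessations, ramps smooth, and whatever oscillation survives at frequency $\gtrsim N$ of small amplitude — then $\|S_N g-g\|_\infty$ is small. This is a quantitative converse, for \emph{uniform} rather than absolute convergence, to Olevskii's lemma and to the heuristic that oscillations ending abruptly inflate the $A$-norm. I would prove it by splitting $D_N\ast g$ into the contribution of the part of $g$ oscillating at frequency comparable to $N$ (controlled by integration by parts / stationary phase against the now-smooth frequency profile) and the rest (which is either of frequency $\ll N$, hence faithfully reproduced by $S_N$, or of small amplitude). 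Fed into the induction — with the amplitude bound coming from $\omega_f$ and the ramps taken progressively smoother — this yields the stage-$n$ control in $(N_{n-1},N_n]$ with an error tending to $0$ that is not destroyed when later stages act at still finer scales.

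The main obstacle, and what makes this genuinely hard rather than bookkeeping, is keeping the three demands compatible simultaneously. The surgery must be cheap enough in $W^{1,1}$ to be summable, yet by Theorem \ref{thm:no C1+eps} the accumulated $h'$ is necessarily unbounded, so the cheapness must come from the surgeries being localized and essentially non-overlapping across scales, not from any bound on $h'$. The surgery must truly flatten the frequency profile, yet a long batch surrounded by nearly flat regions cannot be smoothed by slowing it down (that only relabels the frequency) nor by ramping all the way to zero frequency (that would consume a fixed amount of arclength per batch), so the regularization must exploit the interplay between breaking up the batch and compressing its neighbourhood — something the singular reparametrizations behind Theorems \ref{thm:Saakyan} and \ref{thm:KKKompact} sidestep. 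And the resulting $f\circ h$ must have uniformly small Fourier overshoot at \emph{every} frequency, forcing the per-scale errors to telescope cleanly, so the $\delta_n$, the $N_n$, the ramp orders and the localization sizes must be chosen in a delicately coordinated way. I expect proving the one-scale estimate with constants good enough to survive this coordination to be the crux.
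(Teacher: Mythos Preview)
Your outer scaffolding --- build $h=\lim h_n$, tame the frequency window $(N_{n-1},N_n]$ at stage $n$, glue stages via the Lebesgue constant --- is reasonable and is even close in spirit to the paper's iterative ``removing of randomness'' (\S\ref{sec:Removing-randomness}). The gap is in the one-scale mechanism, and it is a real one, not merely a detail left to be filled in.

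The paper never tries to locate and locally repair ``dangerous features'' of $f$. It starts from a \emph{random} homeomorphism $\psi_q$ of Dubins--Freedman type and, at each dyadic scale, freezes one layer of randomness via a Koml\'os-type sign lemma (Lemma \ref{lem:Komlos} and its sharpening in \cite{KO23}). The point is that the choice at each dyadic centre $d$ is binary --- upper half versus lower half of a small interval --- so its effect on $\mathbb{E}\int(f\circ\phi)D_r(\cdot-t)\,dt$ is \emph{exactly linear} in signs $\varepsilon_k=\pm1$; the resulting matrix $(v_{k,j})$ has $1/(|k-j|+1)$ decay, and the signs are chosen \emph{globally}, all $k$ at once controlling all translates $j$ and all $r$, not burst-by-burst. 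Absolute continuity then comes from a separate idea: making the randomness parameter $q(d)$ depend on the local Haar coefficients of $f$, with the John--Nirenberg inequality for dyadic BMO showing that this $q$ is small enough to force $\psi_q$ absolutely continuous deterministically, yet large enough that the derandomisation still kills the resonances.

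Your surgery, by contrast, rests on the claim that a function with a ``regular instantaneous-frequency profile'' has small Fourier overshoot and that one can always engineer such a profile by ramping. But a generic continuous $f$ has no instantaneous frequency: the picture of sine bursts with abrupt onsets is the Kahane--Katznelson \emph{example}, not the general case, and your own Schauder truncations $f_n$ are piecewise linear, so their partial-sum behaviour is governed by the positions and heights of corners, for which ``ramping the frequency'' has no obvious meaning. Even where the heuristic does apply, the repair is local while the Dirichlet kernel is not: smoothing one burst can create a new resonance from the interference of several repaired bursts, and your plan contains no cancellation device analogous to the global sign selection of Lemma \ref{lem:Komlos}. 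That linearisation --- reducing the nonlinear choice of $h$ to a sign problem with a controlled matrix --- is the core of the paper's argument, and your proposal has no counterpart to it; without one I do not see how your one-scale estimate can be proved for arbitrary $f$ with constants that survive the iteration.
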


Thus Theorem \ref{thm:main} resolves Luzin's first problem. The full
proof appears in \cite{KO23}. Here we will discuss informally some
ideas involved in it.

\section{Random Homeomorphisms}

Let us remind the reader our earlier result in the subject \cite{KO98}.
\begin{thm}
\label{thm:KO98}For any $f\in C(\mathbb{T})$ there exists a H\"older
homeomorphism $h$ such that \,$\|S_{n}(f\circ h)\|_{\infty}=o(\log\log n)$.
\end{thm}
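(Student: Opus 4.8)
The plan is to build $h$ as a random homeomorphism, obtained by a random rearrangement at dyadic scales, and to show that the expected sup-norm of the partial sums $S_n(f\circ h)$ is $o(\log\log n)$. The starting point is the observation, already implicit in the discussion of Saakyan's theorem, that if the superposition $g=f\circ h$ has a Schauder (triangle-wave) decomposition in which only one triangle of each dyadic length occurs, then $\widehat g(n)=o(1/n)$ and the partial sums are uniformly bounded; the difficulty is that for a general $f$ one cannot achieve such lacunarity exactly, only approximately, scale by scale. So first I would fix a target error sequence and approximate $f$ by a piecewise-linear function on a fine dyadic partition, controlling the modulus of continuity; this reduces the problem to understanding $S_n$ of a piecewise-linear function after a random reparametrisation.

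The core construction is a random homeomorphism $h=\lim h_k$, where $h_k$ is obtained from $h_{k-1}$ by subdividing each interval of a dyadic partition into $2^{m_k}$ equal subintervals (for a rapidly growing sequence $m_k$) and then applying a \emph{random} monotone rearrangement of the values of $f\circ h_{k-1}$ on those subintervals — concretely, permuting the increments and reassembling them into a new increasing-then-matching profile, with the permutation chosen uniformly at random and independently across the different parent intervals and across scales. The point of randomisation is that the Fourier coefficients $\widehat{f\circ h}(n)$ become, at each relevant scale, sums of many independent mean-zero contributions, so that the expected squared modulus is small and, crucially, the \emph{tails} behave like those of a sum of independent bounded variables. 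One then estimates $\mathbb E\,\|S_n(f\circ h)\|_\infty$ by splitting the frequency $n$ according to which scale $k=k(n)$ is `resonant' with it, bounding the contribution of the much-coarser scales trivially (they contribute bounded, smooth pieces), the much-finer scales by the lacunarity/smoothness gained from the rapid growth of $m_k$, and the resonant scale by a Salem–Zygmund-type maximal inequality for random trigonometric sums, which produces a factor of order $\sqrt{\log(\text{number of scales below }n)}$. Since the number of scales grows only doubly-logarithmically in $n$ if $m_k$ grows fast enough, $\sqrt{\log k(n)}=o(\log\log n)$, and summing the three contributions gives the claimed bound; finally one uses that $\mathbb E\,\|S_n(f\circ h)\|_\infty=o(\log\log n)$ forces the existence of at least one $h$ in the support of the measure with the same asymptotic, perhaps after passing to a subsequence or invoking a Borel–Cantelli argument over a sparse set of $n$'s together with monotonicity of $\|S_n\|_\infty$ within dyadic blocks.

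The main obstacle, I expect, is the resonant-scale estimate: at the scale $k(n)$ the function $f\circ h_{k(n)}$ is genuinely oscillating at frequency comparable to $n$, and one must show that the random permutation destroys the coherent alignment of these oscillations with the Dirichlet kernel — the very resonance that the Kahane–Katznelson example (figure~\ref{fig:KK}) exploits to defeat smooth $h$. This requires more than a second-moment bound; one needs a uniform-in-$n$ control of a random trigonometric polynomial of degree $\sim n$ with $\sim 2^{m_{k(n)}}$ random signs, i.e. a chaining argument over the torus giving $\|\cdot\|_\infty \lesssim \sqrt{\log(\deg)}\cdot(\text{coefficient }\ell^2\text{ norm})$, and then a careful verification that the coefficient $\ell^2$ norm is itself small enough (controlled by the oscillation of $f$ at scale $k(n)$, which one can pre-arrange to be tiny by choosing the dyadic partition fine) so that the product is $o(\log\log n)$ rather than merely $O(\log n)$. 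Making the bookkeeping of the three scale-regimes consistent — in particular choosing the growth rate of $m_k$ so that `much finer' scales really are negligible while keeping the number of scales below $n$ doubly-logarithmic — is the delicate quantitative heart of the argument.
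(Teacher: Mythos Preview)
The paper's route is different and cleaner: it takes $h$ to be the Dubins--Freedman random homeomorphism $\phi$ (set $\phi(0)=0$, $\phi(1)=1$, and let each $\phi(k/2^n)$ for odd $k$ be uniform on $[\phi((k-1)/2^n),\phi((k+1)/2^n)]$, conditionally independently). This object is H\"older almost surely by Graf--Mauldin--Williams, and the theorem is actually proved in the stronger almost-sure form (Theorem~\ref{thm:KOp}): for \emph{every} $f$, $\|S_n(f\circ\phi)\|_\infty=o(\log\log n)$ a.s. The paper further notes that $\log\log n$ is \emph{sharp} for this $\phi$, and that the same $\log\log n$ already appears in the linearised model where one replaces the homeomorphism by i.i.d.\ signs on intervals of length $1/n$ (see the discussion around Lemma~\ref{lem:Komlos}). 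So the double logarithm is intrinsic to i.i.d.-type randomisation at every dyadic scale; it is not an artefact that can be shrunk by spacing the scales more sparsely.

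Your proposal has a real gap at the quantitative step. You claim a Salem--Zygmund maximal inequality yields a factor $\sqrt{\log(\text{number of scales below }n)}$, but Salem--Zygmund controls $\|P\|_\infty$ for a random trigonometric polynomial by $\sqrt{\log(\deg P)}\cdot\|\widehat P\|_2$; at the resonant scale the degree is $\sim n$, so the factor is $\sqrt{\log n}$, and choosing $m_k$ large does nothing to reduce it---it only relabels which $k$ is resonant. There are several secondary problems as well: the phrase ``random monotone rearrangement of the values of $f\circ h_{k-1}$\ldots permuting the increments'' does not specify a homeomorphism (permuting equal subintervals is the identity; permuting unequal ones needs you to say which, and to check continuity); uniformly random permutations give exchangeable, not independent, contributions, so ``independent mean-zero'' is not available as stated; $\|S_n\|_\infty$ is not monotone in $n$ within dyadic blocks, so your extraction argument fails as written; and you never argue that the limiting $h$ is H\"older, which in the paper is a nontrivial cited fact about Dubins--Freedman rather than something automatic.
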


The proof uses the Dubins-Freedman random homeomorphism \cite{DF67}.
This is a random increasing function $\phi:[0,1]\to[0,1]$ constructed
as follows. Take $\phi(0)=0$ and $\phi(1)=1$. Take $\phi(\frac{1}{2})$
to be uniform between 0 and 1. Then take $\phi(\frac{1}{4})$ to be
uniform between 0 and $\phi(\frac{1}{2})$, and $\phi(\frac{3}{4})$
to be uniform between $\phi(\frac{1}{2})$ and 1, and otherwise independent.
Continue similarly, taking $\phi(k/2^{n})$ to be uniform between
$\phi((k-1)/2^{n})$ and $\phi((k+1)/2^{n})$ for all odd $k\in\{1,3,\dotsc,2^{n}-1\}$.
Almost surely this can be extended to a H\"older homeomorphism \cite{GMW86}.
We may now restate Theorem \ref{thm:KO98}.

\begin{thm7p}\customlabel{thm:KOp}{\ref*{thm:KO98}'}For any continuous
function $f$, if $\phi$ is a Dubins-Freedman random homeomorphism
then the Fourier partial sums of the superposition $f\circ\phi$ have
norms $o(\log\log n)$ almost surely.

\end{thm7p}

Certainly, one would not expect a purely random construction to solve
Luzin's problem. In fact, the double logarithmic estimate above is
sharp (also in \cite{KO98}). Our hope in 1998 was to solve Luzin's
problem using a de Leeuw-Kahane-Katznelson like construction \cite{dLKK77},
with \cite{KO98} serving as an element of the construction, but this
hope never materialised.

\section{Random signs}

We shall now define and discuss a `linearised' version of the problem.
For a function $f$ with $\|f\|\leq1$ consider the partial sums:
\[
S_{n}(f\circ h;x)=\int f(h(t))D_{n}(x-t)\,dt,
\]
where $D_{n}$ is the Dirichlet kernel. To make the problem discrete
assume that $x=j/n$ for $j\in\{0,\dotsc,n-1\}.$

The linearisation we have in mind is to replace the homeomorphism
by a multiplication with signs. Instead of $f\circ h$ we replace
$f$ in the interval $[k/n,(k+1)/n)$ by $\epsilon_{k}f$ for some
$\epsilon_{k}=\pm1$.

\begin{qst*} Can one find $\epsilon_{k}$ such that
\[
\bigg|\sum_{k=0}^{n-1}\epsilon_{k}\int_{k/n}^{(k+1)/n}f(t)\,D_{n}\Big(\frac{j}{n}-t\Big)\,dt\bigg|<C\quad\forall j
\]
for a constant $C$ independent of $j$, $n$ and $f$?

\end{qst*}

The integrals above can be bounded, in absolute value, by $C/(|k-j|+1)$.
Hence the following lemma gives the answer.
\begin{lem}
\label{lem:Komlos}Let $v_{k,j}$ be numbers satisfying $|v_{k,j}|\leq1/(|k-j|+1)\,.$
Then there are signs $\epsilon_{k}$ such that 
\[
\bigg|\sum_{k=1}^{n}\epsilon_{k}v_{k,j}\bigg|<C\quad\forall j.
\]
\end{lem}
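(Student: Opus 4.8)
The plan is to prove Lemma \ref{lem:Komlos} by a Beck--Fiala / partial-colouring argument, or — what amounts to the same thing — by recognising it as a discrepancy statement for a matrix whose columns decay like the harmonic sequence. The key observation is that the bound $|v_{k,j}|\le 1/(|k-j|+1)$ means the matrix $V=(v_{k,j})$ has the property that every row and every column has $\ell^1$-norm at most $O(\log n)$, but more importantly that the ``mass'' of $V$ is concentrated near the diagonal. A crude first attempt — just apply a Chernoff bound to random $\epsilon_k$ — gives $|\sum_k \epsilon_k v_{k,j}|\lesssim \sqrt{\sum_k v_{k,j}^2}\cdot\sqrt{\log n}\lesssim\sqrt{\log n}$, which is \emph{not} bounded, so randomness alone is insufficient and one genuinely needs a correction/derandomisation step. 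This is the analogue of Spencer's ``six standard deviations'' phenomenon: the union bound over $j$ costs a $\sqrt{\log n}$ that must be removed.

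First I would dyadically decompose each column: write $v_{\cdot,j}=\sum_{s\ge 0} v^{(s)}_{\cdot,j}$ where $v^{(s)}_{\cdot,j}$ is supported on the annulus $\{k : 2^s\le |k-j|+1 < 2^{s+1}\}$, so that $v^{(s)}$ has entries of size $\lesssim 2^{-s}$ on a set of $\lesssim 2^s$ indices. For each fixed scale $s$, the contribution to column $j$ has $\ell^2$-norm $\lesssim 2^{-s}\sqrt{2^s}=2^{-s/2}$, and the collection of vectors $\{v^{(s)}_{\cdot,j}\}_j$ across scales behaves like a sparse system. The plan is then to run a partial-colouring iteration (Beck--Fiala style): at each round, treat the currently ``active'' constraints as a linear system of low enough dimension that a nonzero $\pm1$-ish vector in the kernel exists, freeze the coordinates that have reached $\pm1$, and recurse. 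Because each scale-$s$ block touches few coordinates, the total number of ``tight'' constraints stays controlled and the accumulated discrepancy at scale $s$ is $O(2^{-s/2})$ — or at worst $O(1)$ per scale with a further $\sqrt{s}$ saving from the sparsity — and summing the geometric-type series over $s$ yields a bound independent of $n$.

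An alternative, possibly cleaner route I would try in parallel is to invoke the Banaszczyk-type theorem on signing vectors: if $u_1,\dots,u_n$ are vectors in a normed space with $\|u_k\|\le 1$, there exist signs with $\|\sum \epsilon_k u_k\|\le C$ when the ``target body'' has large enough Gaussian measure. Here I would take $u_k$ to be the $k$-th column of $V^{\mathsf T}$ viewed in $\ell^\infty$, note $\|u_k\|_{\ell^\infty}\le 1$ trivially but use the finer $\ell^1/\ell^2$ structure of the \emph{rows} to verify the Gaussian-measure hypothesis for the cube $[-C,C]^n$ — this is exactly where the harmonic decay $1/(|k-j|+1)$ is used, via $\sum_j 1/(|k-j|+1)^2 = O(1)$ uniformly in $k$.

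The hard part will be the union-bound loss over the $n$ different values of $j$: a naive argument controls each $|\sum_k\epsilon_k v_{k,j}|$ only up to $\sqrt{\log n}$, and squeezing this down to an absolute constant is the whole content of the lemma. I expect the decisive input to be that, after the dyadic split, the scale-$s$ constraints are $O(2^s)$-sparse and hence genuinely ``few'' in the Beck--Fiala sense — the sparsity at each scale must beat the logarithmic number of scales, and balancing these is the delicate point. A secondary nuisance is the boundary effect near $k\approx 1$ and $k\approx n$, where a column's mass is truncated; this only helps (fewer active entries) but must be bookkept so the geometric sum still closes.
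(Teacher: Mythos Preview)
Your proposal takes a genuinely different route from the paper and, as written, has a real gap at exactly the point you flag as ``delicate''.

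The paper's proof is a \emph{hierarchical random construction} (renormalisation-group style): partition $\{1,\dotsc,n\}$ into blocks of size $K$, choose random signs inside each block and fix a good realisation, then group blocks into super-blocks and repeat at the next scale. The near-diagonal decay $1/(|k-j|+1)$ is exploited as \emph{locality}: a block of $K$ consecutive variables interacts strongly only with the $O(K)$ constraints $j$ nearby, so inside each block there are only $O(K)$ constraints that matter and a good random sign pattern exists with positive probability. Far-away constraints are deferred to higher levels of the hierarchy. This locality is the mechanism that removes the union-bound loss.

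Your Banaszczyk route cannot give $O(1)$ as stated. The Gaussian measure of the cube $[-C,C]^n$ is below $1/2$ for any fixed $C$ once $n$ is large, so Banaszczyk applied with the cube as target yields only $O(\sqrt{\log n})$. This is exactly the known bound toward the Koml\'os conjecture, and the paper explicitly remarks that Koml\'os is open and would imply the lemma. The ``finer $\ell^{2}$ structure of the rows'' you invoke, namely $\sum_{j}v_{k,j}^{2}=O(1)$, \emph{is} the Koml\'os hypothesis; it does not raise the Gaussian measure of the cube, so you have not used the specific Toeplitz-type decay at all.

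Your Beck--Fiala/partial-colouring route has the analogous gap. After the dyadic split $V=\sum_{s}V^{(s)}$ you need a \emph{single} sign vector $\epsilon$ controlling all scales simultaneously; you cannot run partial colouring on each $V^{(s)}$ separately and add. You assert the scale-$s$ contribution is $O(2^{-s/2})$, ``or at worst $O(1)$ per scale with a further $\sqrt{s}$ saving'', but no mechanism is given for either estimate with a common $\epsilon$, and the second would in any case sum to $\sum_{s\le\log n}s^{-1/2}\asymp\sqrt{\log n}$, not $O(1)$. The observation that the scale-$s$ rows are $O(2^{s})$-sparse is correct but, in the Beck--Fiala framework, sparsity is measured per \emph{column} (variable), and each column of $V^{(s)}$ also has $O(2^{s})$ entries of size $2^{-s}$, giving column $\ell^{1}$-norm $O(1)$ --- so Beck--Fiala at scale $s$ yields discrepancy $O(1)$, and over $\log n$ scales this is again $O(\log n)$ or, with more care, $O(\sqrt{\log n})$.

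The idea you are missing is precisely the one the paper uses: exploit locality by choosing signs block-by-block at increasing scales, so that at level $\ell$ each block faces only $O(K)$ effective constraints rather than all $n$ of them.
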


The proof of Theorem \ref{thm:main} in \cite{KO23} has an analogous
linearised lemma (lemma 3.1 in \cite{KO23}) but its formulation is
more complicated due to the need to control partial sums for all $n$
simultaneously. 

Nevertheless lemma \ref{lem:Komlos} above captures a significant
amount of the difficulty. For example, taking $\epsilon_{k}$ be i.i.d.~does
not work. The maximum becomes $\approx\log\log n$, which is also
the reason for the $\log\log n$ in Theorem \ref{thm:KOp} above.

For the proof of lemma \ref{lem:Komlos} we use a `hierarchical random
construction'. By this we mean that we divide the numbers $\{1,\dotsc,n\}$
to blocks of size $K$ (for some carefully chosen $K$) and use random
signs in each block. In each block it is possible to show that with
positive probability one can find signs which give a reasonably good
estimate, so we choose these signs. We then take the blocks, divide
them into larger scale blocks, and inside each `2nd level block' choose
signs randomly to get an improved estimate. Continuing with larger
and larger blocks we eventually reach the scale of $n$. This method
is inspired by the `renormalisation group' method in statistical mechanics
and field theory. We remark that a similar method was used by B. Kashin
\cite{K79} in his discrete version of the Menshov correction theorem.

It is interesting to compare Lemma \ref{lem:Komlos} with the the
following conjecture of J. Komlos.
\begin{conjecture*}
If $v_{k}$ are vectors in Euclidean space such that $\|v_{k}\|_{2}\leq1$
for all $k$ then there are signs $\epsilon_{k}$ such that 
\[
\Big\|\sum_{k}\epsilon_{k}v_{k}\Big\|_{\infty}<C.
\]
\end{conjecture*}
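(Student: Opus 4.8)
The plan is to attack this with the partial‑colouring method, following the circle of ideas of Beck, Spencer and Banaszczyk. Arrange the $v_k$ as the columns of a matrix $A$, so that a signing $\epsilon\in\{\pm1\}^m$ produces the vector $A\epsilon\in\mathbb{R}^n$ whose $\ell^\infty$‑norm we want to keep bounded by an absolute constant. The cleanest known tool is Banaszczyk's theorem: if $U\subset\mathbb{R}^n$ is convex and symmetric with Gaussian measure at least $1/2$ and all the $v_k$ are short enough, then some full signing has $A\epsilon\in U$. Taking $U$ a suitable multiple of the cube already yields a signing with $\|A\epsilon\|_\infty=O(\sqrt{\log n})$, which is the first milestone. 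To go below this, the first real step would be to prove a \emph{partial‑colouring lemma}: for any such $A$ there is a fractional $x\in[-1,1]^m$ with at least $m/2$ of the coordinates already equal to $\pm1$ and $\|Ax\|_\infty\le c$, with $c$ absolute.

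The second step is to iterate. Once half the coordinates are frozen at $\pm1$ at cost $c$, what remains is an instance on $\le m/2$ of the original vectors — their $\ell^2$‑norms are unchanged — so the partial‑colouring lemma applies again, and after $O(\log m)$ rounds every coordinate is a sign. The total cost is the sum of the per‑round costs, so to obtain an absolute constant rather than $c\log m$ one needs the discrepancy added in round $j$, when about $m/2^{j}$ coordinates are still alive, to decay geometrically in $j$. This is exactly the spirit of the hierarchical/renormalisation construction the paper uses for Lemma~\ref{lem:Komlos}: group the coordinates into blocks, balance each block at cost $O(1)$, then treat the block‑sums as a coarser instance and repeat; if at every scale the balanced block‑sums stay $O(1)$ in $\ell^\infty$ while their number drops by a constant factor, the errors telescope to an absolute constant. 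The hope would be to make the residual system genuinely "easier" as it shrinks, so that one can afford a convex body shrinking with the scale.

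The main obstacle — and the reason this is, to the best of my knowledge, still open — is precisely this geometric decay. Banaszczyk's route hits a hard wall: the cube $[-C,C]^n$ has Gaussian measure tending to $0$ for any fixed $C$ as $n\to\infty$, so no single application of a "cube of bounded size" can succeed and a naive iteration only recovers $O(\sqrt{\log n})$. Breaking the barrier requires exploiting that the $n$ constraints $|(A\epsilon)_i|\le C$ are never all tight simultaneously — for instance an adaptive, instance‑dependent convex body, or a Gaussian‑random‑walk/martingale argument that spends its budget unevenly across coordinates — and one would want a partial‑colouring lemma whose cost on the residual instance at scale $j$ is $O(2^{-\delta j})$ for some fixed $\delta>0$. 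Proving such a lemma, or finding another mechanism that beats $\sqrt{\log n}$, is the crux. The Fourier‑analytic special case in this paper (Lemma~\ref{lem:Komlos}) works because the rigid Hilbert‑kernel decay $|v_{k,j}|\le 1/(|k-j|+1)$ forces exactly this scale‑by‑scale summability for free; the conjecture above asks for the same phenomenon with no structure whatsoever, and that is where I expect the argument to get stuck.
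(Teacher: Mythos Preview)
The statement you were asked to prove is not a theorem of the paper but the \emph{Koml\'os conjecture}, which the paper explicitly presents as open: ``Had Koml\'os' conjecture been known, it would, of course, imply lemma~\ref{lem:Komlos}.'' There is no proof in the paper to compare against, and you correctly recognise this, writing that the problem ``is, to the best of my knowledge, still open'' and that the argument is ``where I expect \dots\ to get stuck.''

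Your discussion of the landscape is accurate and well-informed: Banaszczyk's theorem does give $O(\sqrt{\log n})$, the partial-colouring / iterative framework is the standard line of attack, and the obstruction you identify --- the lack of geometric decay of the per-round cost when no structural hypothesis like the Hilbert-kernel bound is available --- is exactly the barrier. Your remark that Lemma~\ref{lem:Komlos} succeeds precisely because the decay $|v_{k,j}|\le 1/(|k-j|+1)$ forces summability across scales matches the paper's own view of the relationship between the lemma and the conjecture. But to be clear: what you have submitted is a (correct) explanation of why no proof is currently known, not a proof proposal, and none was expected since the paper itself offers none.
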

Had Koml\'os' conjecture been known, it would, of course, imply lemma
\ref{lem:Komlos}. J. Spenser proved that it is true if we allow $\epsilon_{k}\in[-1,1]$
and half of them in $\{-1,1\}$ \cite{S85}. See also \cite{B98}.

\section{\label{sec:Removing-randomness}Removing randomness}

Let us come back to random homeomorphisms. Here we describe the first
of the two main ideas involved in the proof of Theorem \ref{thm:main}.
Starting with a random homeomorphism of Dubins-Freedman type, we are
going to `remove randomness' step by step, keeping the behaviour of
the average $\mathbb{E}(f\circ\phi)$ and its partial Fourier sums
under control.

Fix a number $q$, $0<q<1$. The first step is to define a variation
of the Dubins-Freedman random homeomorphism $\psi_{q}$. The only
difference from Dubins-Freedman is as follows. Given a dyadic interval
$I=[(k-1)/2^{n},(k+1)/2^{n}]$, the image of the point $d=k/2^{n}$
is defined uniformly distributed on the interval concentric to $\psi(I)$
and of length $q|\psi(I)|$. 

Unlike the Dubins-Freedman homeomorphism which was H\"older almost
surely, the homeomorphism $\psi_{q}$ is Holder \emph{deterministically},
and for small $q$ the smoothness is close to 1. It is indeed rather
unusual in probabilistic constructions that the difference between
almost sure and deterministic behaviour makes any difference at all,
but here it does, as will be clear in a few paragraphs.

Assume a function $f$ is given with $\|f\|=1$. For dyadic points
$d=k/2^{n}$ with $k$ odd we call $n$ the rank of $d$. Let $n$
be fixed. Assume that we have a modification of $\psi_{q}$ (denote
it by $\phi$), satisfying the following conditions:
\begin{enumerate}
\item For all dyadic points $d$ of rank $<n$ we have that $\phi(d)$ is
not random.
\item For all $d$ of rank $n$ we have that $\phi(d)$ is uniformly distributed
over some interval $J(d)$ of length $q2^{-\ell}|\phi(I)|$, concentric
to $\phi(I)$. Here $\ell$ is some parameter independent of $d$.
\item For all $d$ of higher rank the conditional distributions of $\phi(d)$
remains as it was at the beginning, namely uniform on an interval
of length $q|\phi(I)|$, concentric to $\phi(I)$.
\end{enumerate}
We now make the next modification (denote it by $\phi'$), changing
only the condition (ii). It is replaced by
\begin{enumerate}
\item [(ii')]For all $d$ of rank $n$ the image $\phi'(d)$ is uniformly
distributed over a half of $J(d)$, upper or lower.
\end{enumerate}
The choice of the half, depending on $f$, is based on the linearised
lemma (Lemma \ref{lem:Komlos}, or to be more precise, its variation
in \cite{KO23} which we have not presented). To get a feeling of
which matrix we use lemma \ref{lem:Komlos} with, consider the matrix
$(v_{k,j})$ (a $2^{n-1}\times2^{n-1}$ matrix) where the $(k,j)$
entry is the effect of the choice of which half of $J(k2^{-n})$ we
take on $\mathbb{E}\int f(\phi'(t))D_{2^{n}}(j2^{-n}-t)$.

Note carefully how the problem was linearised. For each odd $k$ we
choose only between having $\phi'(k2^{-n})$ uniform in the upper
half of $J(k2^{-n})$ and uniform in the lower half of $J(k2^{-n})$.
In $\phi$ the value of $\phi(k2^{-n})$ is uniform on the whole of
$J(k2^{-n})$. Thus the distribution of $f\circ\phi$ is exactly the
average of the distributions of the two possibilities choices for
$f\circ\phi'$. Thus the two possible values of $\mathbb{E}\int(f\circ\phi')D$
average to $\mathbb{E}\int(f\circ\phi)D$. Of course, there are two
choices for each $k$, but the different intervals $[(k-1)2^{-n},(k+1)2^{-n}]$
behave independently. Thus 
\[
\int(f\circ\phi)D-\int(f\circ\phi')D=\sum\varepsilon_{k}v_{k}
\]
where $\varepsilon_{k}$ take the values $\pm1$, with $\varepsilon_{k}=1$
meaning we took the upper half of $J(k2^{-n})$ and $\varepsilon_{k}=-1$
meaning we took the lower half of $J(k2^{-n})$. The $v_{k}$ depend
also on the translation of the Dirichlet kernel i.e.\ on the $j$
in $D_{2^{n}}(j2^{-n}-t)$ --- above when we wrote $D$ we actually
meant $D_{2^{n}}(j2^{-n}-t)$. Thus we get a matrix $(v_{k,j})$ and
we are in a position to apply lemma \ref{lem:Komlos}.

In the full proof in \cite{KO23} we need to control $D_{r}$ for
all $r$ so we need many more rows --- the $k$ still take only $2^{n-1}$
values but we need to add to the rows that correspond to $D_{2^{n}}(j2^{-n}-t)$
also rows that correspond to $D_{r}$ for $r\ne2^{n}$. We get a $2^{n-1}\times\infty$
matrix but with the values decaying as the rows increase.

To sum up, we get that there exists a choice of a corresponding half
of $J(d)$ for each $d$ so that all Fourier sums of the function
\[
F=\mathbb{E}(f\circ\phi')-\mathbb{E}(f\circ\phi)
\]
will be small. More precisely, 
\[
\|S_{r}(F)(x)\|_{\infty}<\exp(-c(\ell+|n-\log_{2}r|))
\]
(recall that $\ell$ appeared in requirement (ii), where we had $|J(d)|=q2^{-\ell}|\phi(I)|$).

Now by induction over $\ell$ (for a fixed $n$) we get a random homeomorphism
$\phi_{n}$, for which all points of rank $n$ become non-random.
Then another induction, over $n$ provides a deterministic homeomorphism
$h$.

Summing all the deviations, we get a weaker version of Theorem \ref{thm:main}.
\begin{thm}
There is a H\"older homeomorphism $h$ (of any pre-given order $<1$)
such that $f\circ h$ has bounded Fourier sums.
\end{thm}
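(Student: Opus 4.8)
The plan is to assemble the deterministic homeomorphism $h$ as a limit of the random homeomorphisms $\psi_q$ after the randomness is stripped out in the order described in Section~\ref{sec:Removing-randomness}, and then to bound the Fourier sums of $f\circ h$ by a telescoping sum over all the derandomisation steps. First I would fix the target H\"older order, say $\alpha<1$, and choose $q=q(\alpha)$ small enough that $\psi_q$ is deterministically $\alpha$-H\"older; since every subsequent modification only redistributes mass within the intervals $J(d)$ (each concentric to $\phi(I)$ and of length a fixed fraction of $|\phi(I)|$), every homeomorphism produced along the way, and in particular the final $h$, inherits the same $\alpha$-H\"older bound. This is exactly the point where the deterministic (rather than almost sure) smoothness of $\psi_q$ is used.

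Next I would run the two nested inductions. For a fixed rank $n$, the inner induction over the parameter $\ell$ replaces, one scale at a time, the uniform-on-$J(d)$ law of $\phi(d)$ by a point mass, using the linearised lemma (Lemma~\ref{lem:Komlos}, or rather its $2^{n-1}\times\infty$ matrix variant in \cite{KO23}) to pick, for each rank-$n$ dyadic point $d$, which half of the current interval to keep. Each such step changes the averaged superposition by $F=\mathbb{E}(f\circ\phi')-\mathbb{E}(f\circ\phi)$, and we have the key estimate
\[
\|S_r(F)\|_\infty < \exp\bigl(-c(\ell+|n-\log_2 r|)\bigr).
\]
Summing over $\ell\ge 0$ collapses the rank-$n$ points to non-random ones at the cost of $\sum_{\ell\ge0}\exp(-c(\ell+|n-\log_2 r|))\le C\exp(-c|n-\log_2 r|)$. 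Then the outer induction over $n$ produces the deterministic $h$, and summing over $n$ gives
\[
\|S_r(f\circ h)\|_\infty \le \|S_r(\mathbb{E}(f\circ\psi_q))\|_\infty + \sum_{n\ge1} C\exp\bigl(-c|n-\log_2 r|\bigr) \le C' ,
\]
since the geometric series in $|n-\log_2 r|$ sums to a constant uniformly in $r$, and the partial sums of $\mathbb{E}(f\circ\psi_q)$ are bounded because $\mathbb{E}(f\circ\psi_q)$ is a fixed continuous (indeed smooth, by the averaging) function. One must also check that the pointwise limit of the homeomorphisms exists and is itself a homeomorphism: this follows because the modifications at rank $n$ only move dyadic points of rank $\ge n$ and do so within intervals whose lengths shrink geometrically, so the sequence is uniformly Cauchy and the limit is strictly increasing and continuous.

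The main obstacle is the uniformity of the bound over \emph{all} $r$ simultaneously. A single derandomisation step at rank $n$ only directly controls $S_{2^n}$; to control $S_r$ for $r\ne 2^n$ one needs the off-diagonal decay $\exp(-c|n-\log_2 r|)$ in the displayed estimate, and obtaining this requires the matrix fed to the balancing lemma to have rows indexed by all $r$, with entries decaying away from $r=2^n$ — this is precisely the ``$2^{n-1}\times\infty$ matrix with values decaying as the rows increase'' alluded to above, and it is the reason Lemma~\ref{lem:Komlos} must be upgraded to its more elaborate version in \cite{KO23}. Verifying that the relevant entries $v_{k,j}^{(r)}$ — the effect on $\mathbb{E}\int f(\phi'(t))D_r(x-t)\,dt$ of flipping the $k$-th half-interval — genuinely satisfy such a decay, using the localisation of the change to an interval of length $\asymp 2^{-n}$ against the Dirichlet kernel $D_r$, is the technical heart of the argument; once that is in hand, everything else is bookkeeping of convergent geometric series.
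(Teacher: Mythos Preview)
Your proposal is correct and follows essentially the same approach as the paper: start from the deterministically H\"older random homeomorphism $\psi_q$ with $q$ chosen from the target order, run the double induction (over $\ell$ then $n$) using the linearised balancing lemma to choose halves, and telescope the bounds $\exp(-c(\ell+|n-\log_2 r|))$ into a double geometric series uniform in $r$. You also correctly identify the crux --- the need for the $2^{n-1}\times\infty$ matrix version of Lemma~\ref{lem:Komlos} to obtain the off-diagonal decay in $|n-\log_2 r|$ --- and the reason the deterministic (not merely a.s.) H\"older property of $\psi_q$ is essential.
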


At this point the reader might be able to appreciate why it was important
that $\psi_{q}$, the random homeomorphism we started the process
with, was H\"older deterministically and not just almost everywhere.
Our process of moving from $\phi$ to $\phi'$ in a double induction
over $\ell$ and $n$ reduced the support of the homeomorphism gradually,
in a complicated and $f$-dependent manner. Thus the resulting homeomorphism,
$h$, is not a $\psi_{q}$-typical homeomorphism and it is impossible
to conclude from the fact that $\psi_{q}$ satisfies some property
with probability 1 anything about $h$. But \emph{all }homemorphisms
in the support of $\psi_{q}$ are H\"older, and thus so is $h$. 

We remark also that $h^{-1}$ (the inverse here is as homeomorphisms)
is also H\"older, again for any pre-given order $<1$. This is for
the same reason: the random homeomorphism we started with, $\psi_{q}$,
had this property deterministically.

\section{Absolutely continuous homeomorphisms}

The scheme above, started with $\psi_{q}$, where $q$ is a constant,
allows us to get a H\"older homeomorphism, but not an absolutely
continuous one.

To get an absolutely continuous homeomorphism we cannot have our random
homeomorphism fluctate at each scale identically. We need it to fluctuate
only where we really need it. For this purpose, we make $q$ not a
constant number but a function of dyadic rationals $d$, such that
the local `amount of randomness' would correspond to local oscillations
of $f$.

Roughly, if $f$ is `flat' on a dyadic interval centered at $d$,
then we do not need much randomness there and we can take the value
of $q(d)$ to be small. This makes the homeomorphism smoother which
allows us to hope eventually for to get it to be absolutely continuous. 

The problem, however, is whether any $f$ has enough `flatness'. On
the other hand, if $q(d)$ is too small then it provides not enough
randomness to kill resonances with the Dirichlet kernel. The Haar
decomposition of $f$ plays an important role in our construction.
Recall that the Haar functions are 
\[
1,\mathbbm{1}_{[0,1/2]}-\mathbbm{1}_{[1/2,1]},\frac{1}{\sqrt{2}}(\mathbbm{1}_{[0,1/4]}-\mathbbm{1}_{[1/4,1/2]}),\dotsc
\]
In other words, other than the function 1, each Haar function is supported
on some dyadic interval $I$ of order $n$ and takes the value $|I|^{-1/2}$
on its left half and $-|I|^{-1/2}$ on its right half. We denote this
function by $\chi_{I}$.

Let now $I$ be some dyadic interval and let $d$ by its middle. We
define
\[
q_{f}(d)\coloneqq\frac{1}{|I|^{3/2}}\sum_{J\subseteq I}\langle f,\chi_{J}\rangle^{2}|J|^{1/2}.
\]
To get a feeling for the function $q$, examine it for the case that
$f$ is a Radamacher function (a square wave, if you prefer) with
$2^{n}$ jumps, namely $f(x)=\sign(\sin(2^{n}\pi x))$. In this case
$\langle f,\chi_{J}\rangle$ is zero unless $|J|=2^{1-n}$, and in
this case $\langle f,\chi_{J}\rangle=|J|^{1/2}$. Inserting into the
formula above we see that $q(\frac{1}{2})$ is quite small ($2^{-(n-1)/2}$
to be precise) and then increases as the rank of $d$ increases, until
$d$ with rank $n$ where $q(d)=1$. For $d$ of higher rank, $q(d)=0$.
This corresponds with the following intuitive understanding how the
homeomorphism $\psi$ should be constructed. There's no reason to
let $\psi(\frac{1}{2})$ fluctuate too much (and indeed $q(\frac{1}{2})$
is small). But when reaching points of rank $n$ we certainly want
$\psi$ to have the possibility to fluctuate, and indeed there $q$
is large. (Of course, the square wave does not have any resonance
with the Dirichlet kernel to start with, because of its regular structure,
but the same considerations would hold for a perturbed square wave.
For example, take the lengths of all waves, not $2^{-n}$ like above,
but some arbitrary numbers between $2^{-n+1}$ and $2^{-n-1}$. The
Haar coefficients will be similar, and so will be $q$).

Once $q_{f}(d)$ is defined for all dyadic $d$ in $[0,1]$, we define
the `starting' homeomorphism $\psi_{q}$ as above. (For technical
reasons, it is in fact the inverse homeomorphism $\psi_{q}^{-1}$
that has a Dubins-Freedman structure. In other words, $q_{f}(d)$
does not determine how much $\psi$ will fluctuate at $d$ but at
$\psi^{-1}(d)$. The reader is referred to the end of section 4 in
\cite{KO23} for a detailed discussion of this point). 

We prove:
\begin{enumerate}
\item The random homeomorphism $\psi_{f}$ is absolutely continuous deterministically.
\item It admits a process of `removing of randomness', similar to one described
in section \ref{sec:Removing-randomness}, which finally gives a deterministic
homeomorphism $h$, as required in Theorem 2.
\end{enumerate}
The first claim requires $q$ to be `small enough', while the second
requires $q$ to be `large enough'. Thus the choice of $q$ is a delicate
point in the proof. In particular, in the proof of the first claim,
the \emph{John-Nirenberg inequality} for dyadic BMO functions is used
to control $q$.

\section{Some open problems}

\lazyenum 

\item Can one get a Lipschitz homeomorphism in Theorem \ref{thm:main}?
We remind the reader that this is impossible with a $C^{1+\varepsilon}$
homeomorphism, see Theorem \ref{thm:no C1+eps} above. It might also
be worth noting that in Theorem \ref{thm:main} it is possible to
require in addition that $h'\in L^{P}$ for any $p<\infty$, given
in advance. Further, the inverse homeomorphism $h^{-1}$ is also absolutely
continuous and also has $(h')^{-1}\in L^{p}$ for any $p<\infty$
(these generalisations are also proved in \cite{KO23}). But Lipschitz
would not follow from the same construction.

\item Is it possible to find for any $f\in C(\mathbb{T})$ a H\"older
homeomorphism $h$ such that $f\circ h$ is in $\widehat{l_{p}}$,
the space of functions whose Fourier transform is in $l_{p}(\mathbb{Z}$),
for some $p>1$? There are two strengthenings of this question which
are known to be false and one weakening which is known to be true.
First, it is not possible to have $f\circ h\in\widehat{l_{1}}$, even
without the requirement that $h$ be H\"older (see Theorem \ref{thm:Luzin1}
above). Next, it is not possible to have $f\circ h\in\widehat{l_{p}}$
with $h$ an absolutely continuous homeomorphism \cite[Theorem 5.1]{O85}.
Finally, it is possible to have $f\circ h\in\widehat{l_{p}}$ if one
does not care about the smoothness of $h$, see Theorem \ref{thm:Saakyan}
above.

Finally, it is interesting to see what the P\'al-Bohr approach gives.
On the one hand it gives $f\circ h\in W^{1/2,2}\subset\widehat{l_{p}}$
for all $p>1$. On the other hand, the homeomorphism $h$ does enjoy
some smoothness. It satisfies
\[
h(x)-h(y)\le\frac{C}{|\log(x-y)|}\qquad\forall x\ne y.
\]
Interestingly, the inverse homeomorphism $h^{-1}$ does not enjoy
any smoothness. For any $\omega$ with $\omega(x)\to0$ as $x\to0$
it is possible to construct a continuous function $f$ such that the
$h$ one gets from the P\'al-Bohr theorem does not satisfy $|h^{-1}(x)-h^{-1}(y)|\le C\omega(|x-y|)$.
We will not prove these two claims but we promise the readers that
they are relatively standard facts on the behaviour of the harmonic
measure.

\end{enumerate}

\end{document}